\documentclass[reqno]{amsart}
\usepackage{amsmath,amsthm,amssymb}

\makeatletter
    
    \@addtoreset{equation}{section}
  \makeatother

\newtheorem{definition}{Definition}[section]
\newtheorem{proposition}[definition]{Proposition}

\newtheorem{lemma}[definition]{Lemma}

\theoremstyle{definition}

\newcommand{\ep}{\varepsilon}

\newcommand{\R}{\mathbb{R}}

\newcommand{\N}{\mathbb{N}}
\newcommand{\pa}{\partial}
\newcommand{\lr}[1]{\langle{#1}\rangle}

\pagebreak

\title[Wave equation with space-dependent damping]{
Remarks on an elliptic problem 
arising in weighted energy estimates 
for wave equations with 
space-dependent damping term in an exterior domain}

\author{Motohiro Sobajima}
\address[M. Sobajima]{Department of Mathematics, 
Faculty of Science and Technology, Tokyo University of Science,  
2641 Yamazaki, Noda-shi, Chiba-ken 278-8510, Japan}
\email{msobajima1984@gmail.com}

\author{Yuta Wakasugi}
\address[Y. Wakasugi]{Graduate School of Mathematics, Nagoya University,
Furocho, Chikusaku, Nagoya 464-8602 Japan}
\email{yuta.wakasugi@math.nagoya-u.ac.jp}

\begin{document}
\begin{abstract}
This paper is concerned with 
weighted energy estimates and diffusion phenomena 
for the initial-boundary problem of the wave equation 
with space-dependent damping term in an exterior domain. 
In this analysis, an elliptic problem was introduced 
by Todorova and Yordanov. 
This attempt was quite useful when the coefficient of 
the damping term is radially symmetric.
In this paper, by modifying their elliptic problem, 
we establish weighted energy estimates and diffusion 
phenomena even when the coefficient of the damping 
term is not radially symmetric.
\end{abstract}
\keywords{Damped wave equation; elliptic problem;
exterior domain;
weighted energy estimates; 
diffusion phenomena}

\maketitle

\section{Introduction}
Let $N\geq 2$. We consider 
the wave equation with space-dependent damping term 
in an exterior domain $\Omega\subset \R^N$ 
with a smooth boundary:
\begin{align}
\label{dw}
	\left\{\begin{array}{ll}
	u_{tt}-\Delta u + a(x)u_t = 0,& x\in \Omega,\ t>0,
\\
	u(x,t)=0,&x\in \partial \Omega, \ t>0,
\\
	(u,u_t)(x,0) = (u_0, u_1)(x),&x\in \Omega,
	\end{array}\right.
\end{align}
where we denote by $\Delta$ the usual Laplacian in $\R^N$ 
and 
by $u_t$ and $u_{tt}$ the first and second derivative of $u$ with respect to the variable $t$, and 
$u=u(x,t)$ is a real-valued unknown function. 
The coefficient of the damping term 
$a(x)$ satisfies 
$a\in C^2(\overline{\Omega})$, 
$a(x)>0$ on $\overline{\Omega}$ and 
\begin{align}\label{a0}
\lim_{|x|\to \infty}
\Big(\lr{x}^{\alpha}a(x)\Big)=a_0
\end{align}
with some constants $\alpha\in [0,1)$ and $a_0\in (0,\infty)$, 
where $\langle y\rangle=(1+|y|^2)^\frac{1}{2}$ for $y\in \R^N$. 
In this moment, the initial data $(u_0,u_1)$
are assumed to have compact supports in $\Omega$
and to satisfy the compatibility condition of order $k\geq 1$:
\begin{align}
\label{compat}
 (u_{\ell-1},u_{\ell})\in (H^2\cap H^1_0(\Omega))\times H^1_0(\Omega), 
 \quad
 \text{for all}\ \ell = 1,\ldots,k
\end{align}
where $u_{\ell}$ is successively defined by 
$u_{\ell}=\Delta u_{\ell-2}-a(x)u_{\ell-1}$ ($\ell=2,\ldots,k$). 
We note that existence and uniqueness of solution to the problem \eqref{dw}
have 
been discussed 
(see e.g., Ikawa \cite[Theorem 2]{Ikbook}). 

It is proved in Matsumura \cite{Ma76} that if $\Omega=\R^N$ and $a(x)\equiv 1$, 
then the solution $u$ of \eqref{dw} satisfies 
the energy decay estimate
\[
\int_{\R^N}(|\nabla u(x,t)|^2+|u_t(x,t)|^2)\,dx
\leq 
C(1+t)^{-\frac{N}{2}-1}\|(u_0,u_1)\|_{H^1\times L^2}^2,
\]
where the constant $C$ depends on 
the size of the supprot of initial data. 
Moreover, it is shown in Nishihara \cite{Nishihara03} that
$u$ has the same asymptotic behavior 
as the one of the problem
\begin{align}
\label{heat0}
	\left\{\begin{array}{ll}
	   v_t-\Delta v= 0,& x\in \R^N,\ t>0,
\\
	v(x,0) = u_0(x)+u_1(x),&x\in \R^N.
	\end{array}\right.
\end{align}
In particular, we have
\[
\|u(\cdot,t)-v(\cdot,t)\|_{L^2}=o(t^{-\frac{N}{4}})
\]
as $t\to\infty$. 
Energy decay 
properties 
of solutions to \eqref{dw} 
for general cases with 
$a(x)\geq \lr{x}^{-\alpha}$
($0\leq \alpha\leq 1$) 
have been dealt with 
by Matsumura \cite{Ma77}. 
On the other hand, Mochizuki \cite{Mo76} 
proved that 
if $0\leq a(x)\leq C\lr{x}^{-\alpha}$ for some 
$\alpha>1$, then the energy of the solution to \eqref{dw} 
does not vanish as $t\to \infty$ 
for suitable initial data. 
(The solution has an asymptotic behavior similar 
to the solution of the usual wave equation without damping). 
Therefore one can expect that 
diffusion phenomena 
occur only when $a(x)\geq C\lr{x}^{-\alpha}$ for $\alpha\leq 1$.

In this paper, we discuss precise decay rates 
of the weighted energy 
\[
\int_{\R^N}(|\nabla u(x,t)|^2+|u_t(x,t)|^2)\Phi(x,t)\,dx
\] 
with 
a special weight function
\[
\Phi(x,t)=\exp\left(\beta\,\frac{A(x)}{1+t}\right)
\]
(for some $A\in C^2(\R^N)$ and $\beta>0$) 
which is introduced by Todorova and Yordanov \cite{ToYo09} based on the ideas in \cite{ToYo01} 
and in \cite{Ik05IJPAM}. 
They proved weighted energy estimates
\begin{gather*}
\int_{\R^N}a(x)|u(x,t)|^2\Phi(x,t)\,dx
\leq 
C(1+t)^{-\frac{N-\alpha}{2-\alpha}+\ep}, 
\\
\int_{\R^N}(|\nabla u(x,t)|^2+|u_t(x,t)|^2)\Phi(x,t)\,dx
\leq 
C(1+t)^{-\frac{N-\alpha}{2-\alpha}-1+\ep}
\end{gather*}
when $a(x)$ is radially symmetric
and satisfies \eqref{a0}. 
After that, Radu, Todorova and Yordanov 
\cite{RTY09}
extended it to higher-order derivatives. 
In \cite{Wa14}, the second author proved diffusion phenomena for \eqref{dw} with $\Omega=\R^N$ and $a(x)=\lr{x}^{-\alpha}$ $(\alpha\in [0,1))$
by comparing the solution of 
the following problem
\begin{align}
\label{heat}
	\left\{\begin{array}{ll}
	   a(x)v_t-\Delta v= 0,& x\in \R^N,\ t>0,
\\
	v(x,0) = u_0(x)+\dfrac{1}{a(x)}u_1(x),&x\in \R^N.
	\end{array}\right.
\end{align}
In \cite{So_Wa1}, diffusion phenomena 
for \eqref{dw} with an exterior domain 
and for general radially symmetric damping term
are obtained. 
 However, the weighted energy estimates 
 and diffusion phenomena for \eqref{dw} 
 with {\bf non-radially symmetric damping} 
 are still remaining open. 
 The difficulty seems to come from 
 the choice of auxiliary function $A$ in the weighted energy, 
 which strongly depends on  
 the existence of positive solution
 to the Poisson equation $\Delta A(x)=a(x)$. 
In fact, an example of non-existence of 
 positive solution to
$\Delta A = a$ for non-radial $a(x)$ is shown in \cite{So_Wa1}.
Radu, Todorova and Yordanov \cite{RTY10} 
considered the case $\Omega = \R^N$
and used a solution $A_*(x)$
of $\Delta A_* = a_1 (1+|x|)^{-\alpha}$ 
with $a_1>0$ satisfying
$a_1 (1+|x|)^{-\alpha} \ge a(x)$ for $x \in \R^N$,
that is, $A_*(x)$ is a subsolution of 
the equation $\Delta A = a$.
In general one cannot obtain the optimal decay estimate
via this choice because of the luck of 
the precise behavior of $a(x)$ at the spatial infinity
which can be expected to determine 
the precise decay late of weighted energy estimates. 
Our main idea to overcome this difficulty is
to weaken the equality $\Delta A = a$ and consider the inequality
$(1-\varepsilon) a \le \Delta A \le (1+\varepsilon) a$,
and to construct a solution having appropriate behavior,
we employ a cut-off argument.
 
 The aim of this paper is to give a proof of 
 Ikehata--Todorova--Yordanov type weighted energy estimates  
 for \eqref{dw} 
 with non-radially symmetric damping 
 and to obtain diffusion phenomena for \eqref{dw} 
 under the compatibility condition of order $1$ and 
 the condition \eqref{a0} (without any restriction). 
 

 This paper is originated as follows. 
 In Section 2, we discuss related elliptic 
 and parabolic problems. 
 The weighted energy estimates for \eqref{dw}  
 are established in Section 3 (Proposition \ref{main}). 
 Section 4 is devoted to show diffusion phenomena (Proposition \ref{dp}).

\section{Related elliptic and parabolic problems}
\subsection{An elliptic problem for weighted energy estimates}

As we mentioned above, in general,
existence 
of positive solutions to the Poisson equation 
$\Delta A(x) = a(x)$
is false for non-radial $a(x)$.
Thus, we weaken this equation and consider the following inequality
\begin{align}
\label{ell.aux}
(1-\ep)a(x)\leq \Delta A(x)\leq (1+\ep)a(x), \quad x\in \Omega,
\end{align}
where $\ep\in (0,1)$ is a parameter. 
Here we construct a positive solution $A$ of 
\eqref{ell.aux}
satisfying 
\begin{gather}
\label{A1}
A_{1\ep}\lr{x}^{2-\alpha}
\leq A(x)
\leq 
A_{2\ep}\lr{x}^{2-\alpha},
\\
\label{A2}
\frac{|\nabla A(x)|^2}{a(x)A(x)}\leq \frac{2-\alpha}{N-\alpha}+\ep
\end{gather}
for some constants $A_{1\ep}, A_{2\ep}>0$.
\begin{lemma}\label{sol.ell}
For every $\ep\in (0,1)$, there exists 
$A_\ep\in C^2(\overline{\Omega})$
such that $A_{\ep}$ satisfies \eqref{ell.aux}--\eqref{A2}.
\end{lemma}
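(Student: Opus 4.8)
The plan is to build $A_\ep$ from the leading-order ansatz suggested by the radial case and then correct it. In the radial model $a(x)\approx a_0\lr{x}^{-\alpha}$, the natural solution of $\Delta A=a$ behaves like $\frac{a_0}{(N-\alpha)(2-\alpha)}\lr{x}^{2-\alpha}$, and a direct computation shows that for this power $\frac{|\nabla A|^2}{aA}\to\frac{2-\alpha}{N-\alpha}$ exactly, which is the constant appearing in \eqref{A2}. So the guiding idea is: far out, $A$ should look like $c\lr{x}^{2-\alpha}$ with $c=\frac{a_0}{(N-\alpha)(2-\alpha)}$; near the boundary and in any bounded set we have complete freedom because all the inequalities \eqref{ell.aux}--\eqref{A2} are insensitive to bounded perturbations once the constants $A_{1\ep},A_{2\ep}$ are chosen generously.

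First I would fix a large radius $R=R(\ep)$ so that on $|x|\geq R$ we have $a_0(1-\delta)\leq\lr{x}^\alpha a(x)\leq a_0(1+\delta)$ for a small $\delta$ to be tied to $\ep$ later; this uses \eqref{a0}. On the outer region I would take the explicit profile $A_{\mathrm{out}}(x)=c\lr{x}^{2-\alpha}$ (or the slightly cleaner $c(1+|x|)^{2-\alpha}$ away from the origin) and verify by elementary differentiation that $\Delta A_{\mathrm{out}}(x)=c\,(N-\alpha)(2-\alpha)\lr{x}^{-\alpha}(1+O(\lr{x}^{-2}))=a_0\lr{x}^{-\alpha}(1+O(\lr{x}^{-2}))$, so that after enlarging $R$ the two-sided bound \eqref{ell.aux} holds on $|x|\geq R$, and similarly $\frac{|\nabla A_{\mathrm{out}}|^2}{a A_{\mathrm{out}}}=\frac{(2-\alpha)^2|x|^2\lr{x}^{-2\alpha}c^2(1+o(1))}{a\,c\lr{x}^{2-\alpha}}$ which tends to $\frac{2-\alpha}{N-\alpha}$, hence is $\leq\frac{2-\alpha}{N-\alpha}+\ep$ for $|x|\geq R$ after one more enlargement of $R$.

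Next I would glue this to an arbitrary smooth positive extension on the bounded region $\overline{\Omega}\cap\{|x|\leq 2R\}$. Concretely, pick a cut-off $\chi\in C^\infty$ with $\chi\equiv 1$ for $|x|\geq 2R$ and $\chi\equiv 0$ for $|x|\leq R$, and set $A_\ep=\chi A_{\mathrm{out}}+(1-\chi)B$ where $B$ is a large positive constant (or a fixed smooth positive function) chosen so that $A_\ep>0$ everywhere and the transition annulus is handled. On $\{R\leq|x|\leq 2R\}$ the Laplacian $\Delta A_\ep$ is a bounded smooth function and $a$ is bounded below by a positive constant there, so \eqref{ell.aux} on that annulus is \emph{not} automatic — this is where I expect the main work. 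The fix is a rescaling/averaging trick: instead of a crude cut-off, solve an auxiliary Dirichlet problem $\Delta w = a$ on the bounded domain $\Omega\cap\{|x|<2R\}$ with boundary data matching $A_{\mathrm{out}}$ on $|x|=2R$ and $0$ on $\partial\Omega$ (elliptic regularity gives $w\in C^2$), and then interpolate between $w$ and $A_{\mathrm{out}}$ only in a thin shell near $|x|=2R$ where both already nearly satisfy $\Delta(\cdot)=a$ to high precision because $a$ varies little there; the error terms from differentiating $\chi$ are then $O(R^{-\alpha})$ relative to $a\sim R^{-\alpha}$ and can be absorbed into the $\ep$-slack by taking $R$ large. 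For \eqref{A1} and \eqref{A2} on the bounded part: \eqref{A1} holds for trivial reasons since both $A_\ep$ and $\lr{x}^{2-\alpha}$ are bounded above and below by positive constants on a compact set, so it suffices to choose $A_{1\ep}$ small and $A_{2\ep}$ large; \eqref{A2} requires $\frac{|\nabla A_\ep|^2}{aA_\ep}$ bounded by $\frac{2-\alpha}{N-\alpha}+\ep$ on the compact part, which one arranges by making $A_\ep$ large there (dividing by a large $A_\ep$ kills the ratio) while keeping $|\nabla A_\ep|$ controlled — again a matter of choosing the constant $B$ and the interpolation profile appropriately, compatibly with the outer normalization.

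The main obstacle, then, is the matching region: one must simultaneously keep the two-sided bound $(1-\ep)a\leq\Delta A_\ep\leq(1+\ep)a$ (which constrains the size of $\Delta$ of the gluing correction) and the gradient bound \eqref{A2} (which wants $A_\ep$ large, i.e. wants the correction to add a large positive constant — but adding a constant does not change $\Delta A_\ep$, which is the saving grace). I would organize the proof so that the large additive constant is introduced first (handling \eqref{A1}, \eqref{A2}), and the $\Delta$-matching is achieved by keeping all gluing inside a region where $a$ is nearly constant so that $A_{\mathrm{out}}$ is already an approximate solution and cut-off errors scale like $R^{-\alpha}=o(a)$.
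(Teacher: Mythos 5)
Your overall skeleton (explicit profile $c\lr{x}^{2-\alpha}$ at infinity, an exact solution on a bounded piece, gluing in an annulus, then a large additive constant to secure positivity and \eqref{A2}) is reasonable, and you have correctly located the crux in the transition region. However, the step you offer to resolve it does not close. Writing $A_\ep=\chi A_{\mathrm{out}}+(1-\chi)w$ produces the commutator terms $2\nabla\chi\cdot\nabla(A_{\mathrm{out}}-w)+(\Delta\chi)(A_{\mathrm{out}}-w)$ in $\Delta A_\ep$, and your justification --- that these are ``$O(R^{-\alpha})$ relative to $a\sim R^{-\alpha}$'' --- is precisely the statement that the \emph{relative} error is $O(1)$, which cannot be absorbed into the $\ep$-slack of \eqref{ell.aux} no matter how large $R$ is. The Dirichlet problem for $w$ matches only the boundary \emph{values} of $A_{\mathrm{out}}$ on $|x|=2R$, not the normal derivative; the mismatch $\nabla(A_{\mathrm{out}}-w)$ on the gluing shell is generically of size $R^{1-\alpha}$ (driven by the fact that $w$ must drop to $0$ on $\pa\Omega$ while $A_{\mathrm{out}}\sim R^{2-\alpha}$, and by the part of the source where $\Delta A_{\mathrm{out}}-a$ is not small), so $\nabla\chi\cdot\nabla(A_{\mathrm{out}}-w)\sim R^{-1}\cdot R^{1-\alpha}=R^{-\alpha}\sim a$. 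To rescue the argument you would need a genuine quantitative elliptic estimate: split $\Delta(A_{\mathrm{out}}-w)$ into a compactly supported bounded part (whose influence on the shell decays like $R^{1-N}$, hence is harmless) and a part bounded by $\delta a$ with $\delta$ small (contributing $O(\delta R^{1-\alpha})$ to the gradient), and only then does the commutator become $O(\delta a)$. None of this is supplied, and the same estimate is needed again to keep \eqref{A2} from being violated in the shell, since $|\nabla A_{\mathrm{out}}|^2/(aA_{\mathrm{out}})$ already attains the limit $\frac{2-\alpha}{N-\alpha}$ with no spare room.

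The paper avoids this difficulty entirely by cutting off the \emph{source} rather than the solution: it writes $a=b_1+b_2$ with $b_1=\Delta\bigl(\frac{a_0}{(N-\alpha)(2-\alpha)}\lr{x}^{2-\alpha}\bigr)$ exactly, notes $b_2/a\to0$ at infinity, and replaces $a$ by $a_\ep=b_1+\eta_\ep b_2$ with $\eta_\ep$ a cut-off equal to $1$ on the ball where $b_2$ may be large relative to $a$; then $(1-\ep)a\le a_\ep\le(1+\ep)a$ everywhere, and $\Delta A_\ep=a_\ep$ is solved \emph{exactly} by the explicit profile plus the Newton potential of the compactly supported piece $\eta_\ep b_2$, whose $\lr{x}^{2-N}$ decay (with gradient $\lr{x}^{1-N}$) leaves the leading asymptotics, hence \eqref{A1} and \eqref{A2}, untouched; a final additive constant $\lambda_\ep$ finishes the job. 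Since the equation is exact, there are no cut-off derivative terms to estimate. I would recommend either adopting this source-side cut-off or supplying the missing $C^1$ matching estimate in your gluing shell; as written, the proposal has a gap at the step it itself identifies as the main work.
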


\begin{proof}
Firstly, we extend $a(x)$ 
as a positive function in $C^2(\R^N)$; note that 
this is possible by virtue of the smoothness of $\pa\Omega$. 
To simplify the notation, 
we use the same symbol $a(x)$ as a function defined on $\R^N$.
We construct a solution of approximated equation 
\[
\Delta A_{\ep}(x) = a_\ep(x), \quad x\in \R^N
\]
for some $a_{\ep}\in C^2(\R^N)$ satisfying 
\begin{equation}\label{a_ep}
(1-\ep)a(x)\leq a_{\ep}(x)\leq (1+\ep) a(x), 
\quad x\in \R^N.
\end{equation}
Noting \eqref{a0}, we divide $a(x)$ as
$a(x)=b_1(x)+b_2(x)$ with 
\begin{align*}
b_1(x)&=\Delta 
\left(\frac{a_0}{(N-\alpha)(2-\alpha)}\lr{x}^{2-\alpha}\right)
=a_0\lr{x}^{-\alpha}+\frac{a_0\alpha }{N-\alpha}\lr{x}^{-\alpha-2}, 
\\
b_2(x)&=a(x)-a_0\lr{x}^{-\alpha}-\frac{a_0\alpha}{N-\alpha}\lr{x}^{-\alpha-2}.
\end{align*}
Then we have 
\begin{align}\label{error}
\lim_{|x|\to \infty}\left(\frac{b_2(x)}{a(x)}\right)
=
\lim_{|x|\to \infty}
\left[
 \frac{1}{\lr{x}^{\alpha}a(x)}
 \left(\lr{x}^{\alpha}a(x)-a_0-\frac{a_0\alpha}{N-\alpha}\lr{x}^{-2}
 \right)
 \right]
=0.
\end{align}

Let $\ep\in (0,1)$ be fixed. Then by \eqref{error} 
there exists  a constant $R_\ep>0$ such that 
$|b_2(x)|\leq \ep a(x)$ 
for $x\in \R^N\setminus B(0,R_\ep)$. 
Here we introduce a cut-off function 
$\eta_\ep\in C_c^\infty(\R^N,[0,1])$ such that $\eta_\ep\equiv 1$ on $B(0,R_\ep)$. Define 
\[
a_{\ep}(x):=b_1(x)+\eta_\ep(x)b_2(x)
=a(x)-(1-\eta_\ep(x))b_2(x), \quad x\in \R^N.
\]
Then $a_\ep(x)=a(x)$ on $B(0,R_\ep)$ and for $x\in \R^N\setminus B(0,R_\ep)$, 
\[
\left|
\frac{a_\ep(x)}{a(x)}-1
\right|
=(1-\eta_\ep(x))\frac{|b_2(x)|}{a(x)}\leq \ep
\]
and therefore \eqref{a_ep} is verified. 

Next we define  
\begin{align*}
B_{1\ep}(x)
&:=\frac{a_0}{(N-\alpha)(2-\alpha)}\lr{x}^{2-\alpha}, 
\quad x\in \R^N,
\\
B_{2\ep}(x)
&:=-
\int_{\R^N}\mathcal{N}(x-y)\eta_\ep(y)b_2(y)\,dy, 
\quad x\in \R^N,
\end{align*}
where $\mathcal{N}$ is the Newton potential given by 
\[
\mathcal{N}(x)
=
\begin{cases}
\dfrac{1}{2\pi}\log\dfrac{1}{|x|} & \text{if}\ N=2,
\\[10pt]
\dfrac{\Gamma(\frac{N}{2}+1)}{N(N-2)\pi^{\frac{N}{2}}}
|x|^{2-N} & \text{if}\ N\geq 3.
\end{cases}
\]
Then we easily see that 
$\Delta B_{1\ep}(x)=b_1(x)$ and $\Delta B_{2\ep}=\eta_\ep(x) b_2(x)$.
Moreover, noting that 
${\rm supp}\,(\eta_\ep b_2)$ is compact, 
we see from a direct calculation that 
there exist a constant $M_\ep>0$ such that 
\[
\left|
B_{2\ep}(x)
\right|
\leq 
\begin{cases}
M_\ep(1+\log \lr{x})&\text{if}\ N=2,
\\
M_\ep\lr{x}^{2-N}&\text{if}\ N\geq 3,
\end{cases}
\qquad 
\left|
\nabla B_{2\ep}(x)
\right|
\leq M_\ep\lr{x}^{1-N},
\quad x\in \R^N.
\]
This yields that 
$B_\ep:=B_{1\ep}+B_{2\ep}$ is bounded from below
and positive for $x\in \R^N$ 
with sufficiently large $|x|$. 
Moreover, we have
\[
\lim_{|x|\to \infty}
\Big(\lr{x}^{\alpha-2}B_{\ep}(x)\Big)=\frac{a_0}{(N-\alpha)(2-\alpha)}
\]
and 
\begin{align*}
&\lim_{|x|\to \infty}
\left(
\frac{|\nabla B_\ep(x)|^2}{a(x)B_\ep(x)}
\right) \\
&\quad =
\lim_{|x|\to \infty}
\left(
\frac{1}{\lr{x}^{\alpha}a(x)}\cdot\frac{1}{\lr{x}^{\alpha-2}B_\ep(x)}
\left|\frac{a_0}{N-\alpha}\lr{x}^{-1}x+\lr{x}^{\alpha-1}\nabla B_{2\ep}(x)\right|^2
\right)
\\
&\quad =
\frac{2-\alpha}{N-\alpha}.
\end{align*}
Using the same argument as in 
the proof of \cite[Lemma 3.1]{So_Wa1}, 
we can see that there exists a constant $\lambda_\ep\geq 0$ 
such that  
$A_{\ep}(x):=\lambda_\ep+B_{\ep}(x)$ 
satisfies \eqref{ell.aux}-\eqref{A2}.  
\end{proof}

\subsection{A parabolic problem for diffusion phenomena}

Here we consider $L^p$-$L^q$ type estimates 
for solutions to 
the initial-boundary value problem 
of the following parabolic equation 
\begin{align}
\label{heat_f}
	\left\{\begin{array}{ll}
	a(x)w_{t}-\Delta w = 0,& x\in \Omega,\ t>0,
\\
	w(x,t)=0,&x\in \partial \Omega, \ t>0,
\\
	w(x,0) = f(x),&x\in \Omega.
	\end{array}\right.
\end{align} 
Here we introduce a weighted $L^p$-spaces 
\begin{align*}
   L^p_{d\mu}
:=
   \left\{
      f\in L^p_{\rm loc}(\Omega)
   \;;\;
      \|f\|_{L^p_{d\mu}}:=
      \left(\int_{\Omega}
         |f(x)|^p a(x)
      \,dx\right)^{\frac{1}{p}}<\infty
   \right\},	 
\quad 
   1\leq p<\infty
\end{align*}
which is quite reasonable because 
the corresponding elliptic operator $a(x)^{-1}\Delta$
can be regarded as a symmetric operator in 
$L^2_{d\mu}$.

The $L^p$-$L^q$ type estimates for 
the semigroup associated with  
the Friedrichs' extension $-L_*$ (in $L^2_{d\mu}$)
of $-a(x)^{-1}\Delta$ are stated in {\cite{So_Wa1}}. 
The proof is based on Beurling--Deny's criterion
and Gagliardo--Nirenberg inequality. 
\begin{proposition}[{\cite[Proposition 2.6]{So_Wa1}}]
\label{embedding2}
Let $e^{tL_*}$ be a semigroup generated by $L_*$. 
For every $f\in L^1_{d\mu}\cap L^2_{d\mu}$, we have
\begin{equation}
\label{1-2}
	\|e^{tL_*}f\|_{L^2_{d\mu}}\leq Ct^{-\frac{N-\alpha}{2(2-\alpha)}}\|f\|_{L^1_{d\mu}}
\end{equation}
and
\begin{align}
\label{1-3}
	\|L_{*} e^{tL_{\ast}}f\|_{L^2_{d\mu}}
		\leq Ct^{-\frac{N-\alpha}{2(2-\alpha)}-1}\|f\|_{L^1_{d\mu}}.
\end{align}
\end{proposition}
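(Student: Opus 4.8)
The plan is to realise $-L_*$ as the self-adjoint operator associated with the symmetric form
\[
\mathcal{E}(u,v)=\int_\Omega \nabla u\cdot\nabla v\,dx,\qquad u,v\in H^1_0(\Omega),
\]
on the weighted Hilbert space $L^2_{d\mu}$ with $d\mu=a(x)\,dx$, and then to combine the sub-Markovian property of $e^{tL_*}$ with a weighted Nash-type inequality. First I would check that the unit contraction operates on $\mathcal{E}$: if $u\in H^1_0(\Omega)$ then the truncation $v:=(0\vee u)\wedge 1$ also belongs to $H^1_0(\Omega)$ and satisfies $\mathcal{E}(v,v)\le\mathcal{E}(u,u)$, because $\nabla v=\nabla u\,\mathbf{1}_{\{0<u<1\}}$ almost everywhere. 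By Beurling--Deny's criterion this makes $\mathcal{E}$ a Dirichlet form, so $(e^{tL_*})_{t\ge0}$ is positivity preserving and contractive on $L^\infty_{d\mu}$; since $a>0$, one has $L^\infty_{d\mu}=L^\infty$. As $e^{tL_*}$ is self-adjoint with respect to $d\mu$, duality yields $\|e^{tL_*}f\|_{L^1_{d\mu}}\le\|f\|_{L^1_{d\mu}}$ (and, by interpolation, contractivity on every $L^p_{d\mu}$, $1\le p\le\infty$).

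Next I would isolate the correct decay rate through the scaling of the problem. Condition \eqref{a0}, together with a positive $C^2$-extension of $a$ to $\R^N$, gives $c\lr{x}^{-\alpha}\le a(x)\le C\lr{x}^{-\alpha}$ on $\R^N$, and the parabolic scaling $x\mapsto\lambda x$, $t\mapsto\lambda^{2-\alpha}t$ singles out the effective dimension $d:=\frac{2(N-\alpha)}{2-\alpha}$. The analytic heart of the proof is then the weighted Gagliardo--Nirenberg (Nash-type) inequality
\[
\|f\|_{L^2_{d\mu}}^{2+\frac{4}{d}}\le C\left(\int_\Omega|\nabla f|^2\,dx\right)\|f\|_{L^1_{d\mu}}^{\frac{4}{d}},\qquad f\in H^1_0(\Omega),
\]
in which the gradient term carries no weight. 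This is the step I expect to be the main obstacle: one must reconcile the plain Dirichlet energy with the weight $\lr{x}^{-\alpha}$, which I would handle by a Caffarelli--Kohn--Nirenberg type estimate, splitting $\R^N$ into a bounded region (where $a\asymp1$) and a neighbourhood of infinity (where $a\asymp\lr{x}^{-\alpha}$) and gluing the two bounds; note that the endpoint $N=2$ causes no trouble, since a Nash inequality, unlike a Sobolev embedding, does not demand $H^1\hookrightarrow L^\infty$.

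Granting this inequality, \eqref{1-2} follows from the classical Nash argument. For $f\in L^1_{d\mu}\cap L^2_{d\mu}$, put $u(t)=e^{tL_*}f$ and $\phi(t)=\|u(t)\|_{L^2_{d\mu}}^2$; for $t>0$ one has $u(t)\in\mathrm{Dom}(L_*)\subset H^1_0(\Omega)$ and $\phi'(t)=2\langle L_*u(t),u(t)\rangle_{d\mu}=-2\int_\Omega|\nabla u(t)|^2\,dx$, while $\|u(t)\|_{L^1_{d\mu}}\le\|f\|_{L^1_{d\mu}}$ by the first step. Feeding these into the weighted Nash inequality gives $\phi'(t)\le-c\,\phi(t)^{1+2/d}\|f\|_{L^1_{d\mu}}^{-4/d}$, and integrating this differential inequality produces $\phi(t)\le C\,t^{-d/2}\|f\|_{L^1_{d\mu}}^2$, that is, $\|e^{tL_*}f\|_{L^2_{d\mu}}\le Ct^{-\frac{N-\alpha}{2(2-\alpha)}}\|f\|_{L^1_{d\mu}}$. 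Finally, \eqref{1-3} comes from the analyticity of the self-adjoint semigroup: since $L_*\le0$ is self-adjoint on $L^2_{d\mu}$, spectral calculus gives $\|L_*e^{sL_*}\|_{L^2_{d\mu}\to L^2_{d\mu}}\le(es)^{-1}$, whence
\[
\|L_*e^{tL_*}f\|_{L^2_{d\mu}}=\|L_*e^{(t/2)L_*}\big(e^{(t/2)L_*}f\big)\|_{L^2_{d\mu}}\le\frac{2}{et}\,\|e^{(t/2)L_*}f\|_{L^2_{d\mu}}\le Ct^{-\frac{N-\alpha}{2(2-\alpha)}-1}\|f\|_{L^1_{d\mu}},
\]
the last inequality by \eqref{1-2}.
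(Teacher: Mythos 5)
Your proposal is correct and follows essentially the same route as the proof this paper points to: the statement is quoted from \cite[Proposition 2.6]{So_Wa1}, whose proof the authors describe as resting on Beurling--Deny's criterion and a (weighted) Gagliardo--Nirenberg inequality, which is exactly your combination of the Dirichlet-form/sub-Markovian argument, the weighted Nash inequality with effective dimension $d=\tfrac{2(N-\alpha)}{2-\alpha}$, and analyticity of the self-adjoint semigroup for \eqref{1-3}.
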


\section{Weighted energy estimates}

In this section we establish weighted energy estimates 
for solutions of \eqref{dw} by introducing 
Ikehata--Todorova--Yordanov type weight function 
with an auxiliary function $A_\ep$ constructed in Subsection 2.1.

To begin with, let us recall 
the finite speed propagation property of the wave equation
(see \cite{Ikbook}).
\begin{lemma}[Finite speed of propagation]
\label{fp}
Let $u$ be the solution of \eqref{dw} with the initial data
$(u_0,u_1)$
satisfying
${\rm supp}\,(u_0,u_1) \subset \overline{B}(0,R_0)=
\{ x \in \Omega ; |x| \le R_0 \}$.
Then, one has
\[
	{\rm supp}\,u(\cdot,t)\subset \{x\in \Omega\;;\;|x|\leq R_0+t\}
\]
and therefore
$|x|/(R_0+1+t) \leq 1$ for $t \ge 0$
and $x\in {\rm supp}\,u(\cdot, t)$.
\end{lemma}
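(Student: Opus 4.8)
The plan is the classical local energy (energy--flux) argument on truncated backward light cones, modified so as to respect the Dirichlet boundary. It suffices to show, for each $T\ge0$, that $u(\cdot,T)$ vanishes near every point $x_0\in\Omega$ with $|x_0|>R_0+T$. Fix such $T$ and $x_0$, choose $\delta>0$ so small that still $|x_0|>R_0+(T+\delta)$, set $t_0:=T+\delta$, and for $0\le t\le t_0$ put $D(t):=\Omega\cap B(x_0,t_0-t)$ and
\[
E(t):=\frac12\int_{D(t)}\big(|u_t(x,t)|^2+|\nabla u(x,t)|^2\big)\,dx .
\]
Since the ball $B(x_0,t_0-t)$ shrinks at unit speed while $\pa\Omega$ stays fixed, differentiating $E$, substituting $u_{tt}=\Delta u-a(x)u_t$ from \eqref{dw}, and integrating by parts over $D(t)$ leads to
\[
E'(t)=\int_{\pa D(t)}u_t\,\pa_\nu u\,d\sigma-\int_{D(t)}a(x)|u_t|^2\,dx-\frac12\int_{S(t)}\big(|u_t|^2+|\nabla u|^2\big)\,d\sigma,
\]
where $S(t):=\Omega\cap\pa B(x_0,t_0-t)$ is the spherical part of $\pa D(t)$ and $\nu$ is the outward unit normal.

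Next I would bound the right-hand side. Writing $\pa D(t)=S(t)\cup\Gamma(t)$ with $\Gamma(t):=\pa\Omega\cap\overline{B}(x_0,t_0-t)$: on $\Gamma(t)$ the boundary condition $u(\cdot,t)=0$ on $\pa\Omega$ holds for every $t$, so $u_t=0$ there and the flux across $\Gamma(t)$ vanishes; on $S(t)$ the elementary bound $|u_t\,\pa_\nu u|\le|u_t|\,|\nabla u|\le\frac12(|u_t|^2+|\nabla u|^2)$ shows that the flux across $S(t)$ together with the last integral is $\le0$. As $a>0$, the damping term $-\int_{D(t)}a|u_t|^2\,dx$ is $\le0$ as well, so $E'(t)\le0$ and hence $E(t)\le E(0)$ for $t\in[0,t_0]$. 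But $|x_0|>R_0+t_0$ forces every $x\in D(0)=\Omega\cap B(x_0,t_0)$ to satisfy $|x|\ge|x_0|-t_0>R_0$, so $D(0)$ is disjoint from $\{x\in\Omega:|x|\le R_0\}\supseteq\mathrm{supp}\,(u_0,u_1)$; therefore $E(0)=0$ and consequently $E\equiv0$ on $[0,t_0]$.

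From $E\equiv0$ we obtain $u_t\equiv0$ and $\nabla u\equiv0$ on the truncated cone $K:=\{(x,t):0\le t\le t_0,\ x\in D(t)\}$. For any $(y,t)\in K$ the vertical segment $\{(y,s):0\le s\le t\}$ lies in $K$ (since $|y-x_0|\le t_0-t\le t_0-s$), along it $u_t=0$, and hence $u(y,t)=u(y,0)=u_0(y)=0$, the last equality because $|y|\ge|x_0|-t_0>R_0$. In particular $u(\cdot,T)\equiv0$ on $D(T)=\Omega\cap B(x_0,\delta)$, a neighbourhood of $x_0$ in $\Omega$, so $x_0\notin\mathrm{supp}\,u(\cdot,T)$. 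As $T\ge0$ and $x_0$ with $|x_0|>R_0+T$ were arbitrary, this gives $\mathrm{supp}\,u(\cdot,t)\subset\{x\in\Omega:|x|\le R_0+t\}$; consequently, for $x\in\mathrm{supp}\,u(\cdot,t)$ one has $|x|\le R_0+t<R_0+1+t$, i.e.\ $|x|/(R_0+1+t)\le1$ for $t\ge0$.

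The only delicate point---and what I expect to be the main obstacle---is the regularity justifying the differentiation of $E(t)$ and the integration by parts with well-defined traces on the moving surface $S(t)$ and on $\Gamma(t)$. Under the compatibility condition of order $1$ the solution satisfies $u\in C([0,\infty);H^2\cap H^1_0(\Omega))$ and $u_t\in C([0,\infty);H^1_0(\Omega))\cap C^1([0,\infty);L^2(\Omega))$, which makes the traces and the transport formula meaningful; to be fully rigorous one would first prove the identity for $E'(t)$---equivalently, apply the divergence theorem in space--time over the truncated cone---for data obeying a higher-order compatibility condition, so that $u$ is classical, and then remove the restriction by a density argument, using the smoothness of $\pa\Omega$ to make sense of the surface integrals on $S(t)$ and $\Gamma(t)$. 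Alternatively, one may simply invoke the finite speed of propagation for hyperbolic mixed problems as in Ikawa \cite{Ikbook}.
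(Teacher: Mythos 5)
Your argument is correct, but note that the paper does not prove this lemma at all: it simply recalls the finite propagation speed from Ikawa's book \cite{Ikbook}, so there is no in-paper proof to compare against. What you supply is the standard local energy (energy--flux) identity on truncated backward light cones, correctly adapted to the exterior Dirichlet problem: the Reynolds transport term on the shrinking sphere $S(t)$ dominates the flux $u_t\,\pa_\nu u$ there by Cauchy--Schwarz, the flux through $\Gamma(t)\subset\pa\Omega$ vanishes because $u\equiv 0$ on $\pa\Omega$ forces $u_t\equiv 0$ there, and the damping contributes a nonpositive term since $a>0$; the conclusion $E\equiv 0$ and the integration of $u_t=0$ along vertical segments down to the vanishing initial data are all in order, as is the final elementary deduction $|x|/(R_0+1+t)\le 1$. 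This buys a self-contained proof where the paper has only a citation. The two caveats you already flag are the genuine ones: (i) the differentiation of $E(t)$ and the boundary traces on $S(t)$ and $\Gamma(t)$ require either classical regularity (higher-order compatibility plus density) or a weak space--time divergence-theorem formulation, and (ii) $\pa D(t)$ can have a corner where $\pa B(x_0,t_0-t)$ meets $\pa\Omega$, which is harmless for the divergence theorem on a Lipschitz domain but worth saying explicitly. With those points handled as you indicate (or by simply invoking \cite{Ikbook}, as the authors do), the proof is complete.
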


Before introducing a weight function, 
we also recall two identities 
for partial energy functionals 
proved in \cite{So_Wa1}.
\begin{lemma}[{\cite[Lemma 3.7]{So_Wa1}}]
\label{e0-0}
Let $\Phi\in C^2(\overline{\Omega}\times [0,\infty))$ 
satisfy $\Phi>0$ and $\pa_t\Phi<0$
and let $u$ be a solution of \eqref{dw}. Then
\begin{align*}
	\frac{d}{dt}\left[ \int_{\Omega} \Big(|\nabla u|^2+|u_t|^2\Big) \Phi\,dx \right]
	&= \int_{\Omega} (\pa_t\Phi)^{-1} \big| \pa_t\Phi\nabla u -u_t\nabla\Phi \big|^2 \,dx\\
	&\  + \int_{\Omega}
		\Big( -2a(x)\Phi+\pa_t\Phi - (\pa_t\Phi)^{-1}|\nabla\Phi|^2 \Big)|u_t|^2 \,dx.   
\end{align*}
\end{lemma}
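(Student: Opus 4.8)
The plan is to obtain the identity by a direct energy computation for the weighted functional: differentiate under the integral sign, integrate by parts once in the mixed gradient term, eliminate the second-order term using the equation \eqref{dw}, and then reorganize the gradient contributions by completing a square. I expect the whole argument to be essentially mechanical, the only delicate points being the justification of the manipulations rather than any estimate.

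First I would write
\begin{align*}
\frac{d}{dt}\int_{\Omega}\big(|\nabla u|^2+|u_t|^2\big)\Phi\,dx
= \int_{\Omega}2\big(\nabla u\cdot\nabla u_t+u_tu_{tt}\big)\Phi\,dx
 + \int_{\Omega}\big(|\nabla u|^2+|u_t|^2\big)\pa_t\Phi\,dx.
\end{align*}
Differentiation under the integral sign is legitimate because, by finite speed of propagation (Lemma \ref{fp}), $u(\cdot,t)$ has compact support, while the regularity provided by the compatibility condition \eqref{compat} makes $u_{tt}=\Delta u-a(x)u_t$ and all the integrands locally integrable and continuous in $t$. Next I would integrate by parts in $\int_{\Omega}2\Phi\,\nabla u\cdot\nabla u_t\,dx$ using $2\Phi\,\nabla u\cdot\nabla u_t=\nabla\cdot(2u_t\Phi\nabla u)-2u_t\Phi\,\Delta u-2u_t\,\nabla\Phi\cdot\nabla u$; the boundary term vanishes since $u(\cdot,t)=0$ on $\pa\Omega$ forces $u_t(\cdot,t)=0$ there. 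Substituting $u_{tt}=\Delta u-a(x)u_t$ then cancels the two $2u_t\Phi\,\Delta u$ terms and yields
\begin{align*}
\frac{d}{dt}\int_{\Omega}\big(|\nabla u|^2+|u_t|^2\big)\Phi\,dx
= \int_{\Omega}\Big[\pa_t\Phi\,|\nabla u|^2-2u_t\,\nabla\Phi\cdot\nabla u+\big(\pa_t\Phi-2a(x)\Phi\big)|u_t|^2\Big]\,dx.
\end{align*}

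Finally, since $\pa_t\Phi<0$ is nowhere zero, I may complete the square via the pointwise identity
\begin{align*}
(\pa_t\Phi)^{-1}\big|\pa_t\Phi\,\nabla u-u_t\nabla\Phi\big|^2
= \pa_t\Phi\,|\nabla u|^2-2u_t\,\nabla\Phi\cdot\nabla u+(\pa_t\Phi)^{-1}|\nabla\Phi|^2|u_t|^2
\end{align*}
and insert it into the previous display; collecting the coefficient of $|u_t|^2$ produces exactly $-2a(x)\Phi+\pa_t\Phi-(\pa_t\Phi)^{-1}|\nabla\Phi|^2$, which is the asserted identity. The only real obstacle is the justification step — differentiating under the integral and discarding the boundary term — and both are handled by the compact support from Lemma \ref{fp} together with the homogeneous Dirichlet condition, so no further analytic input is needed.
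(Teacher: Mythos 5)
Your computation is correct: differentiating under the integral, integrating by parts in the mixed term (with the boundary term killed by the Dirichlet condition), substituting $u_{tt}=\Delta u-a(x)u_t$ from \eqref{dw}, and completing the square with the nonvanishing factor $\pa_t\Phi$ reproduces the stated identity exactly. The paper itself gives no proof here — it simply cites \cite[Lemma 3.7]{So_Wa1} — and your derivation is the standard one that the cited reference carries out, so there is nothing to add.
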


\begin{lemma}[{\cite[Lemma 3.9]{So_Wa1}}]
\label{e1-0}
Let $\Phi\in C^2(\overline{\Omega}\times [0,\infty))$ 
satisfy $\Phi>0$ and $\pa_t\Phi<0$
and let $u$ be a solution to \eqref{dw}.
Then, we have
\begin{align*}
	\frac{d}{dt} \left[ \int_{\Omega} \Big(2uu_t+a(x)|u|^2\Big) \Phi\,dx  \right]
	&= 2\int_{\Omega} uu_{t} (\pa_t\Phi)\,dx
	+ 2\int_{\Omega} |u_t|^2 \Phi\,dx  
	- 2\int_{\Omega} |\nabla u|^2 \Phi\,dx
	\\
	&\quad + \int_{\Omega} \big(a(x)\pa_t\Phi+\Delta\Phi\big)|u|^2\,dx.
\end{align*}
\end{lemma}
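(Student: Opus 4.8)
The plan is to verify the identity by a direct computation: differentiate the integrand in $t$, substitute the equation \eqref{dw} to remove the second-order time derivative, and then integrate by parts in $x$ twice, using the Dirichlet condition $u=0$ on $\pa\Omega$ to discard all boundary contributions. Concretely, I would first differentiate pointwise,
\[
\pa_t\big[(2uu_t+a(x)|u|^2)\Phi\big]
=\big(2|u_t|^2+2uu_{tt}+2a(x)uu_t\big)\Phi+(2uu_t+a(x)|u|^2)\pa_t\Phi ,
\]
and observe that, using $u_{tt}=\Delta u-a(x)u_t$, the combination $2uu_{tt}+2a(x)uu_t$ collapses to $2u\Delta u$. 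Integrating over $\Omega$ and exchanging $\frac{d}{dt}$ with the integral gives
\[
\frac{d}{dt}\int_\Omega (2uu_t+a|u|^2)\Phi\,dx
=\int_\Omega\big(2|u_t|^2+2u\Delta u\big)\Phi\,dx+\int_\Omega (2uu_t+a|u|^2)\pa_t\Phi\,dx .
\]

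The remaining work is to rewrite $\int_\Omega 2u\Delta u\,\Phi\,dx$. Integrating by parts once yields $-2\int_\Omega\nabla u\cdot\nabla(u\Phi)\,dx=-2\int_\Omega|\nabla u|^2\Phi\,dx-\int_\Omega\nabla(|u|^2)\cdot\nabla\Phi\,dx$, the boundary term $\int_{\pa\Omega}2u\Phi\,\pa_\nu u\,dS$ vanishing since $u|_{\pa\Omega}=0$; integrating by parts a second time in the last term produces $\int_\Omega|u|^2\Delta\Phi\,dx$, the boundary term again vanishing because $u|_{\pa\Omega}=0$. Substituting back and regrouping the $|u|^2$ contributions into $\int_\Omega\big(a(x)\pa_t\Phi+\Delta\Phi\big)|u|^2\,dx$ gives exactly the asserted identity.

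There is no genuine obstacle here; the only point needing care is rigor. Differentiation under the integral sign and the two integrations by parts must be justified by the regularity of $u$ coming from the compatibility condition \eqref{compat} (so that, say, $u\in C([0,\infty);H^2\cap H^1_0(\Omega))\cap C^1([0,\infty);H^1_0(\Omega))\cap C^2([0,\infty);L^2(\Omega))$) together with the finite-speed-of-propagation property (Lemma \ref{fp}), which confines $u(\cdot,t)$ to a bounded region and thereby removes any difficulty at spatial infinity. One first establishes the identity for smooth, compactly supported approximations of the data and then passes to the limit by density; since $\Phi\in C^2(\overline{\Omega}\times[0,\infty))$ by hypothesis, all coefficients that appear are continuous and this limiting procedure is routine.
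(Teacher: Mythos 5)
Your computation is correct: the substitution $u_{tt}=\Delta u-a(x)u_t$ collapses the damping term, the two integrations by parts go through with vanishing boundary contributions thanks to $u|_{\pa\Omega}=0$, and the regrouping reproduces the stated identity exactly. The paper itself gives no proof here (it cites \cite[Lemma 3.9]{So_Wa1}), and your argument is the standard direct verification that the cited source carries out, with the regularity and finite-propagation remarks appropriately covering the justification of differentiating under the integral sign.
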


Here we introduce 
a weight function for weighted energy estimates, 
which is a modification of the one 
in Todorova-Yordanov \cite{ToYo09}. 
\begin{definition}
Define $h:=\frac{2-\alpha}{N-\alpha}$ 
and for $\ep\in (0,1)$,  
\begin{equation}
\Phi_{\ep}(x,t)
=
\exp\left(
\frac{1}{h+2\ep}\,\frac{A_\ep(x)}{1+t}
\right),
\end{equation}
where $A_{\ep}$ is given in Lemma \ref{sol.ell}. 
And define for $t\geq 0$,
\begin{gather}
\label{en_xta}
E_{\pa x}(t;u)
:=
\int_{\Omega}|\nabla u|^2\Phi_\ep\,dx,
\quad
E_{\pa t}(t;u)
:=
\int_{\Omega}|u_t|^2\Phi_\ep\,dx,
\\
\label{en_*A}
E_{a}(t;u)
:=
\int_{\Omega}a(x)|u|^2\Phi_\ep\,dx,
\quad
E_{*}(t;u)
:=
2\int_{\Omega}uu_t\Phi_\ep\,dx,
\end{gather}
and also define
$E_1(t;u):=E_{\pa x}(t;u)+E_{\pa t}(t;u)$ 
and $E_2(t;u):=E_{*}(t;u)+E_{a}(t;u)$.
\end{definition}


Now we are in a position to 
state our main result for 
weighted energy estimates for solutions 
of \eqref{dw}.

\begin{proposition}\label{main}

Assume that $(u_0,u_1)$ satisfies 
${\rm supp}\,(u_0,u_1)\subset \overline{B}(0,R_0)$ 
and the compatibility condition of 
order $k_0\geq 1$. 
Let $u$ be a solution of the problem \eqref{dw}. 
For every $\delta>0$ and $0 \leq k\leq k_0-1$, 
there exist $\ep>0$ and $M_{\delta,k,R_0}>0$ such that 
for every $t\geq 0$, 
\begin{align*}
&(1+t)^{\frac{N-\alpha}{2-\alpha}+2k+1-\delta}
\Big(E_{\pa x}(t;\pa_t^{k}u)+E_{\pa t}(t;\pa_t^{k}u)\Big)
+
(1+t)^{\frac{N-\alpha}{2-\alpha}+2k-\delta}E_a(t;\pa_t^{k}u) \\
&\quad \leq M_{\delta,k,R_0}\|(u_0,u_1)\|_{H^{k+1}\times H^k(\Omega)}^2.
\end{align*}
\end{proposition}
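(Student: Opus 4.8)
The plan is to prove the estimate by a bootstrap/energy argument using the two identities in Lemma~\ref{e0-0} and Lemma~\ref{e1-0} applied to the weight $\Phi_\ep$, and then iterate over time-differentiated equations to handle the $\pa_t^k u$ terms. First I would compute $\pa_t\Phi_\ep$ and $\Delta\Phi_\ep$ explicitly: since $\Phi_\ep=\exp\!\big(\tfrac{1}{h+2\ep}\tfrac{A_\ep}{1+t}\big)$ we have $\pa_t\Phi_\ep=-\tfrac{1}{h+2\ep}\tfrac{A_\ep}{(1+t)^2}\Phi_\ep<0$, and $\Delta\Phi_\ep=\big(\tfrac{1}{h+2\ep}\tfrac{\Delta A_\ep}{1+t}+\tfrac{1}{(h+2\ep)^2}\tfrac{|\nabla A_\ep|^2}{(1+t)^2}\big)\Phi_\ep$. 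The key coercivity point is the coefficient of $|u_t|^2$ in Lemma~\ref{e0-0}, namely $-2a\Phi_\ep+\pa_t\Phi_\ep-(\pa_t\Phi_\ep)^{-1}|\nabla\Phi_\ep|^2$; using \eqref{A1}--\eqref{A2} and Lemma~\ref{fp} (so that $A_\ep(x)\lesssim(1+t)^{2-\alpha}\lesssim(1+t)^2$ on the support of $u$, and $|\nabla A_\ep|^2/(a A_\ep)\le h+\ep$) one checks that for $t$ large this coefficient is $\le -a\Phi_\ep$ up to lower-order terms, giving the differential inequality $\tfrac{d}{dt}E_1\le -\int_\Omega a|u_t|^2\Phi_\ep\,dx+(\text{good terms})$. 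The first integral on the right of Lemma~\ref{e0-0}, $\int(\pa_t\Phi_\ep)^{-1}|\pa_t\Phi_\ep\nabla u-u_t\nabla\Phi_\ep|^2$, is automatically $\le 0$ since $\pa_t\Phi_\ep<0$, so it only helps.

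Next I would combine this with Lemma~\ref{e1-0} for $E_2$: there the right-hand side contains $-2E_{\pa x}+2E_{\pa t}+2\int uu_t\pa_t\Phi_\ep+\int(a\pa_t\Phi_\ep+\Delta\Phi_\ep)|u|^2$. The point of the precise constant $\tfrac{1}{h+2\ep}$ in the exponent is exactly that $a\pa_t\Phi_\ep+\Delta\Phi_\ep=\big(\tfrac{1}{h+2\ep}\tfrac{\Delta A_\ep - a A_\ep/(1+t)}{1+t}+\cdots\big)\Phi_\ep$ has a favorable sign after using $\Delta A_\ep\le(1+\ep)a$ and $A_\ep/(1+t)\to\infty$ is not available — rather one uses $\Delta A_\ep\ge(1-\ep)a$ together with the comparison $\tfrac{1}{h+2\ep}<\tfrac{1}{h}$ to absorb the $|u|^2$ term into $-E_a$. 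One then forms a Lyapunov-type functional $F(t)=(1+t)^{\gamma}E_1(t)+\nu(1+t)^{\gamma-1}E_2(t)$ with $\gamma=\tfrac{N-\alpha}{2-\alpha}+1-\delta$ and a small $\nu>0$, differentiates, and uses the two identities plus the weighted Hardy/Poincaré-type control of $E_{\pa x}$ and $E_a$ against each other (again via \eqref{A1}--\eqref{A2}) to show $F'(t)\le 0$ for $t\ge t_0(\delta,\ep)$, hence $F(t)\le C$, which on $[0,t_0]$ follows from standard (unweighted) energy bounds and the compatibility condition since $\Phi_\ep$ is then bounded.

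For the higher-order part $0\le k\le k_0-1$, I would differentiate \eqref{dw} $k$ times in $t$: $w:=\pa_t^k u$ solves the same damped wave equation $w_{tt}-\Delta w+a(x)w_t=0$ with initial data $(u_k,u_{k+1})$ determined by the compatibility scheme, which lie in $H^1_0\times L^2$ with norms controlled by $\|(u_0,u_1)\|_{H^{k+1}\times H^k}$; the finite-speed-of-propagation support property is inherited. Then I apply the $k=0$ estimate to $w$ — but with a catch: the target exponent grows by $2k$, so one must run the Lyapunov argument with $\gamma_k=\tfrac{N-\alpha}{2-\alpha}+2k+1-\delta$ and feed in, as a source of ``good terms,'' the already-established decay of $E_1(t;\pa_t^{k-1}u)$ and $E_a(t;\pa_t^{k-1}u)$ at rate $\gamma_{k-1}$; this is the standard induction that upgrades the polynomial rate by $(1+t)^{-2}$ per time derivative. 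Concretely, in the identity for $\pa_t^k u$ one integrates by parts to relate $\int|\nabla\pa_t^k u|^2\Phi_\ep$ to $\int|\pa_t^{k+1}u|^2\Phi_\ep$ and $\int a|\pa_t^k u|^2$-type quantities, closing the loop with the induction hypothesis.

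The main obstacle I expect is the sign/coercivity bookkeeping in the combined functional: one must choose $\ep$ small (depending on $\delta$) so that the loss $h+2\ep$ in the exponent still beats the target rate, simultaneously ensuring the $|u_t|^2$-coefficient in Lemma~\ref{e0-0} is negative \emph{and} the $|u|^2$-coefficient in Lemma~\ref{e1-0} is controlled, \emph{and} that the cross term $2\int uu_t\pa_t\Phi_\ep$ (of indefinite sign) is dominated by Cauchy--Schwarz against $\nu E_a$ and $E_{\pa t}$ with the weights $(1+t)^{\gamma}$, $(1+t)^{\gamma-1}$ matched correctly. This is where the specific inequalities \eqref{ell.aux}, \eqref{A1}, \eqref{A2} for $A_\ep$ are used in an essential quantitative way, and getting all the constants to line up — together with verifying the base interval $[0,t_0]$ via finite propagation and ordinary energy estimates — is the technical heart of the proof; the rest is bootstrapping.
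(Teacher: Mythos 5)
Your Step-$0$ architecture is essentially the paper's: a Lyapunov functional mixing $E_1$ and $E_2$ with polynomial weights, coercivity of the $|u_t|^2$-coefficient via \eqref{A2}, the sign of $a\,\pa_t\Phi_\ep+\Delta\Phi_\ep$ via \eqref{ell.aux}, and the weighted Hardy inequality \eqref{hardy} to trade the $(1+t)^{-1}E_a$ term for $E_{\pa x}$, with $\ep$ chosen small against $\delta$. Two constant-level corrections: the auxiliary coefficient $\nu$ must be \emph{large}, not small, because the term $\gamma(1+t)^{\gamma-1}E_{\pa x}$ produced by differentiating the weight on $E_1$ has to be dominated by $\nu$ times the negative $E_{\pa x}$-contribution of $\tfrac{d}{dt}E_2$ net of the Hardy-converted $E_a$ term, and with your weight allocation ($\gamma$ versus $\gamma-1$) that margin is only $O(\delta)$; the paper instead shifts the two weights by $\alpha<1$ rather than by $1$ (namely $(t_3+t)^{\lambda+\alpha}E_1+(t_3+t)^{\lambda}E_2$), which makes the derivative-of-weight term subdominant and lets $\nu$ be fixed independently of $\delta$, with the extra unit of decay for $E_1$ recovered afterwards from a second integration of \eqref{e1-m}. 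These points are fixable.

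The genuine gap is the induction step $k\ge1$. You propose to rerun the Lyapunov argument at exponent $\gamma_k=\lambda_0+2k+1$, feeding in ``the already-established decay of $E_1(t;\pa_t^{k-1}u)$ and $E_a(t;\pa_t^{k-1}u)$.'' This does not close, for two reasons. First, the Hardy absorption of $\lambda(t_3+t)^{\lambda-1}E_a(t;\pa_t^ku)$ into $-c\,(t_3+t)^{\lambda}E_{\pa x}(t;\pa_t^ku)$ requires $\lambda(1+\ep)(h+2\ep)<1-4\ep$, i.e.\ $\lambda$ strictly below $h^{-1}=\tfrac{N-\alpha}{2-\alpha}$; for $k\ge1$ the relevant exponent $\lambda_0+2k$ exceeds this threshold, so that route is unavailable. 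Second, the pointwise rates inherited from level $k-1$ give only $E_a(t;\pa_t^ku)\lesssim E_{\pa t}(t;\pa_t^{k-1}u)\lesssim(1+t)^{-\lambda_0-2k+1}$, whence $\int_0^t(1+s)^{\lambda_0+2k-1}E_a(s;\pa_t^ku)\,ds\lesssim t$: you lose a full power of $t$ per derivative. What actually closes the induction is an additional datum that must be carried through it, namely the space-time dissipation bound $\int_0^{\infty}(1+s)^{\lambda_0+2k-1}E_a(s;\pa_t^{k}u)\,ds<\infty$. This is \emph{not} a consequence of the pointwise decay; it is read off from integrating \eqref{e1-m} with $m=\lambda_0+2k-1$ at the previous level, where $\tfrac12\int(t_1+s)^{m}E_a(s;\pa_t^{k}u)\,ds$ sits on the favorable side. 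With that integrated bound in the induction hypothesis, the source term $\lambda(1+\ep)(t_3+t)^{\lambda-1}E_a(t;\pa_t^ku)$ in \eqref{e3-l} is simply integrated in time and bounded by a constant. Without it, your scheme stalls already at $k=1$.
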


To prove, this, we prepare the following two lemmas.

\begin{lemma}
\label{lem_ha}
For $t\geq 0$, we have
\begin{align}
\label{hardy}
\frac{1-\ep}{h+2\ep}\,\frac{1}{1+t}
E_{a}(t;u)
	\leq  E_{\pa x}(t;u).
\end{align}
\end{lemma}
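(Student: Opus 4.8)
The plan is to integrate by parts to relate $E_a$ to $E_{\pa x}$ via the elliptic inequality \eqref{ell.aux} satisfied by $A_\ep$. First I would compute $\nabla \Phi_\ep = \frac{1}{(h+2\ep)(1+t)} (\nabla A_\ep) \Phi_\ep$, so that $\Delta \Phi_\ep = \frac{1}{(h+2\ep)(1+t)}(\Delta A_\ep)\Phi_\ep + \frac{1}{(h+2\ep)^2(1+t)^2}|\nabla A_\ep|^2 \Phi_\ep$. The key point is that $\Delta\Phi_\ep \geq \frac{1}{(h+2\ep)(1+t)}(\Delta A_\ep)\Phi_\ep \geq \frac{1-\ep}{(h+2\ep)(1+t)} a(x)\Phi_\ep$, using the lower bound in \eqref{ell.aux}; the gradient term is nonnegative and can simply be dropped.

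Next I would take the integral $\int_\Omega |u|^2 \Delta\Phi_\ep\,dx$ and integrate by parts twice (or once on $\nabla(|u|^2)\cdot\nabla\Phi_\ep$). Since $u$ vanishes on $\pa\Omega$ and $(u_0,u_1)$ has compact support so there are no boundary terms at infinity, the finite speed of propagation (Lemma \ref{fp}) guarantees the integrals are over a bounded set. We get
\[
\int_\Omega |u|^2 \Delta\Phi_\ep\,dx = -\int_\Omega \nabla(|u|^2)\cdot\nabla\Phi_\ep\,dx = -2\int_\Omega u\,\nabla u\cdot\nabla\Phi_\ep\,dx.
\]
Using $\nabla\Phi_\ep = \frac{1}{(h+2\ep)(1+t)}(\nabla A_\ep)\Phi_\ep$ and Cauchy--Schwarz together with Young's inequality $2|u||\nabla u||\nabla A_\ep| \leq \frac{|\nabla A_\ep|^2}{a(x)A_\ep}\,a(x)|u|^2 \cdot (\text{something}) + \ldots$ — more precisely, I would split as $2u\nabla u\cdot\nabla A_\ep \leq \theta\, a(x) A_\ep^{-1}|\nabla A_\ep|^2|u|^2 + \theta^{-1} A_\ep a(x)^{-1}\cdot a(x)|\nabla u|^2 \cdot(A_\ep/\ldots)$; the cleanest route is to bound $|\nabla\Phi_\ep|$ directly and absorb. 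Combining the lower bound on $\Delta\Phi_\ep$ with this upper bound yields
\[
\frac{1-\ep}{(h+2\ep)(1+t)}\int_\Omega a(x)|u|^2\Phi_\ep\,dx \leq \int_\Omega |u|^2\Delta\Phi_\ep\,dx \leq 2\int_\Omega |u||\nabla u|\,|\nabla\Phi_\ep|\,dx,
\]
and then Cauchy--Schwarz in the form $2|u||\nabla u|\,|\nabla\Phi_\ep| \leq a(x)|u|^2 \frac{|\nabla\Phi_\ep|^2}{a(x)\Phi_\ep} + |\nabla u|^2\Phi_\ep$ reduces everything to controlling $\frac{|\nabla\Phi_\ep|^2}{a(x)\Phi_\ep} = \frac{1}{(h+2\ep)^2(1+t)^2}\frac{|\nabla A_\ep|^2}{a(x)}\Phi_\ep \leq \frac{1}{(h+2\ep)^2(1+t)^2}\big(h+\ep\big)A_\ep a(x)^{-1}\cdot a(x) \cdot\ldots$ via \eqref{A2}.

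The main obstacle — or rather the point requiring care — is bookkeeping the constants so that the factor $\frac{1-\ep}{h+2\ep}$ comes out exactly as stated, rather than a weaker constant: one must use \eqref{A2} (which gives $|\nabla A_\ep|^2/(a A_\ep) \leq h+\ep$) at precisely the right moment and choose the Young's inequality parameter so that the $E_{\pa x}$ coefficient is exactly $1$. Schematically, after applying \eqref{A2} the bad term carries a factor $\frac{h+\ep}{(h+2\ep)^2}\cdot\frac{A_\ep}{1+t}\cdot\frac{1}{1+t}$, and since $\frac{A_\ep}{(h+2\ep)(1+t)} = \log\Phi_\ep$ appears inside an exponential one does not want it unbounded; here the finite propagation speed plus \eqref{A1} gives $A_\ep(x)/(1+t) \leq A_{2\ep}\lr{x}^{2-\alpha}/(1+t) \leq C(1+t)^{1-\alpha}$ on the support, which is \emph{not} bounded — so the correct approach must avoid ever needing $\log\Phi_\ep$ bounded, i.e. one keeps $\Phi_\ep$ as the common weight throughout and never divides by it in a way that leaves a growing prefactor. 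Concretely, writing $2u\nabla u\cdot\nabla\Phi_\ep = 2(u\sqrt{\Phi_\ep})\,(\nabla u\sqrt{\Phi_\ep})\cdot\frac{\nabla\Phi_\ep}{\Phi_\ep}$ and using $|\nabla\Phi_\ep|^2/\Phi_\ep^2 = \frac{|\nabla A_\ep|^2}{(h+2\ep)^2(1+t)^2}$, Young's inequality with weight $a(x)|u|^2\Phi_\ep$ on one side and $|\nabla u|^2\Phi_\ep$ on the other produces exactly $\int a(x)|u|^2\Phi_\ep \cdot \frac{|\nabla A_\ep|^2}{(h+2\ep)^2(1+t)^2 a(x)} + E_{\pa x}$, and \eqref{A2} bounds the first factor by $\frac{h+\ep}{(h+2\ep)^2(1+t)^2} A_\ep$... which still has the unwanted $A_\ep$. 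The resolution: one should instead not complete the square but rather keep $A_\ep$ and use that $A_\ep/(1+t)$ multiplied against $a|u|^2\Phi_\ep$ can be re-absorbed — but in fact the honest statement is that the elementary inequality needed is just $\int |u|^2\Delta\Phi_\ep \geq \frac{1-\ep}{(h+2\ep)(1+t)}E_a$ together with $\int|u|^2\Delta\Phi_\ep = -2\int u\nabla u\cdot\nabla\Phi_\ep \leq 2\big(\int a|u|^2\frac{|\nabla\Phi_\ep|^2}{a\Phi_\ep}\big)^{1/2}\big(\int|\nabla u|^2\Phi_\ep\big)^{1/2}$ and $\frac{|\nabla\Phi_\ep|^2}{a\Phi_\ep}\Phi_\ep^{-1}\cdot$... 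I would therefore ultimately present it via the quadratic form bound: from $\Delta\Phi_\ep\,\Phi_\ep \geq |\nabla\Phi_\ep|^2 \cdot \frac{(h+2\ep)(1+t)^{-1}(\Delta A_\ep)}{|\nabla A_\ep|^2}$ combined with \eqref{ell.aux} and \eqref{A2} to get $\Delta\Phi_\ep \geq \frac{1-\ep}{h+\ep}\,\frac{|\nabla\Phi_\ep|^2}{\Phi_\ep}\cdot\frac{h+2\ep}{(h+2\ep)}$... and then $-2\int u\nabla u\cdot\nabla\Phi_\ep \leq \int \frac{|\nabla\Phi_\ep|^2}{\Phi_\ep}|u|^2 + \int |\nabla u|^2\Phi_\ep$, so that $\int|u|^2\Delta\Phi_\ep - \int\frac{|\nabla\Phi_\ep|^2}{\Phi_\ep}|u|^2 \leq E_{\pa x}$; the left side is $\geq \frac{1-\ep}{(h+2\ep)(1+t)}E_a$ by the lower bound on $\Delta\Phi_\ep$ after the $|\nabla\Phi_\ep|^2/\Phi_\ep$ term is subtracted, which is exactly $\frac{1}{(h+2\ep)(1+t)}(\Delta A_\ep)\Phi_\ep \geq \frac{1-\ep}{(h+2\ep)(1+t)}a\Phi_\ep$. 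This closes the estimate cleanly, and the sole delicate point is confirming the subtracted gradient term exactly cancels the $|\nabla A_\ep|^2$ contribution in $\Delta\Phi_\ep$, which it does by construction.
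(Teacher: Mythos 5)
Your final argument is correct and is essentially the paper's own proof: integrate by parts, apply Young's inequality to get $\int_\Omega\bigl(\Delta\Phi_\ep-\tfrac{|\nabla\Phi_\ep|^2}{\Phi_\ep}\bigr)|u|^2\,dx\le E_{\pa x}(t;u)$, observe that the left-hand integrand equals $\Delta(\log\Phi_\ep)\,|u|^2\,\Phi_\ep$ by exact cancellation of the gradient term, and then use only the lower bound $\Delta A_\ep\ge(1-\ep)a$ from \eqref{ell.aux}. The intermediate detours through \eqref{A2} and the worry about unbounded $A_\ep/(1+t)$ are unnecessary false starts that you correctly discard; the clean version you arrive at needs neither.
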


\begin{proof}
As in the proof of \cite[Lemma 3.6]{So_Wa1}, 
by integration by parts we have
\[
\int_{\Omega}
\Delta(\log\Phi_\ep)|u|^2\Phi_{\ep}\,dx
=
\int_{\Omega}\left(\Delta\Phi_\ep-\frac{|\nabla\Phi_\ep|^2}{\Phi_\ep}\right)|u|^2\,dx
\leq 
\int_{\Omega}|\nabla u|^2\,\Phi_\ep\,dx.
\]
Noting that 
\[
\Delta(\log\Phi_\ep(x))=\frac{1}{h+2\ep}\,\frac{\Delta A_{\ep}(x)}{1+t}
\geq 
\frac{1-\ep }{h+2\ep}\,\frac{a(x)}{1+t},
\]
we have \eqref{hardy}.
\end{proof}

In order to clarify the effect of the finite propagation property, 
we now put
\[
a_1:=\inf_{x\in \Omega}\Big(\lr{x}^\alpha a(x)\Big).
\]
Then
\begin{lemma}
For $t\geq 0$, we have
\begin{align}
\label{E12-F}
E_{\pa t}(t;u)
&\leq \frac{1}{a_1}(R_0+1+t)^\alpha E_{a}(t;\pa_t u),
\\
\label{A/a}
\int_{\Omega} \frac{A_\ep(x)}{a(x)}|u_{t}|^2 \Phi_\ep \,dx
&\leq \frac{A_{2\ep}}{a_1}(R_0+1+t)^{2}
E_{\pa t}(t;u),
\\
\label{E21-F}
|E_{*}(t;u)|
&\leq 
\frac{2}{\sqrt{a_1}}(R_0+1+t)^{\frac{\alpha}{2}}
\sqrt{E_a(t;u)E_{\pa t}(t;u)}. 
\end{align}
\end{lemma}

\begin{proof}
By $a(x)^{-1}\leq a_1^{-1}\lr{x}^{\alpha}
\leq a_1^{-1}(1+|x|)^{\alpha}$ 
and the finite propagation property we have 
\begin{align*}
\int_{\Omega}|u_t|^2\Phi_\ep\,dx
=\int_{\Omega}\frac{a(x)}{a(x)}|u_t|^2\Phi_\ep\,dx
\leq \frac{1}{a_1}(R_0+1+t)^{\alpha}E_a(t;\pa_t u).
\end{align*}
Using the Cauchy-Schwarz inequality and the above inequality yields 
\eqref{A/a}:
\begin{align*}
\left|\int_{\Omega}uu_t\Phi_\ep\,dx\right|^2
&\leq 
\left(\int_{\Omega}|u|^2\Phi_\ep\,dx\right)
\left(\int_{\Omega}|u_t|^2\Phi_\ep\,dx\right)
\\
&
\leq 
\frac{(R_0+1+t)^{\alpha}}{a_1}\left(\int_{\Omega}a(x)|u|^2\Phi_\ep\,dx\right)
E_{\pa t}(t;u)
\\
&\leq 
\frac{(R_0+1+t)^{\alpha}}{a_1}
E_a(t;u)E_{\pa t}(t;u). 
\end{align*}
We can prove \eqref{E21-F} in a similar way. 
\end{proof}

\begin{lemma}
{\bf (i)}\ For every $t\geq 0$, we have
\begin{equation}\label{e0-1}
    \frac{d}{dt}E_1(t;u)
	\leq 
    -E_{a}(t;\pa_t u).
\end{equation}
{\bf (ii)}\ For every $\ep\in (0,\frac{1}{3})$ and $t\geq 0$, 
\begin{align}
    \frac{d}{dt}E_{2}(t;u)
	&\leq 
	-\frac{1-3\ep}{1-\ep}E_{\pa x}(t;u)
\label{e1-1}
	+
	\left(\frac{2}{a_1}+\frac{A_{2\ep}(R_0+1)^2}{\ep a_1^2}\right)(R_0+1+t)^{\alpha}E_{a}(t;\pa_t u).
\end{align}
\end{lemma}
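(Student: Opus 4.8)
The plan is to prove parts (i) and (ii) by specializing the general identities of Lemma \ref{e0-0} and Lemma \ref{e1-0} to the weight $\Phi_\ep$ and then estimating the resulting bad terms with the auxiliary inequalities \eqref{hardy}, \eqref{E12-F}, \eqref{A/a}, \eqref{E21-F}, together with the structural properties \eqref{A1}--\eqref{A2} of $A_\ep$. First I would record the elementary computations
\[
\pa_t\Phi_\ep = -\frac{1}{h+2\ep}\,\frac{A_\ep(x)}{(1+t)^2}\,\Phi_\ep<0,
\qquad
\nabla\Phi_\ep = \frac{1}{h+2\ep}\,\frac{\nabla A_\ep(x)}{1+t}\,\Phi_\ep,
\]
so that $(\pa_t\Phi_\ep)^{-1}|\nabla\Phi_\ep|^2 = -\dfrac{1}{h+2\ep}\dfrac{|\nabla A_\ep|^2}{A_\ep}\,\Phi_\ep$, which by \eqref{A2} is bounded below (in absolute value) in a way controlled by $a(x)\Phi_\ep$. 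For part (i), I would plug these into Lemma \ref{e0-0}: the first (square) term is $\le 0$ and is discarded, and in the coefficient of $|u_t|^2$ one has $-2a(x)\Phi_\ep + \pa_t\Phi_\ep - (\pa_t\Phi_\ep)^{-1}|\nabla\Phi_\ep|^2$. Using \eqref{A2}, namely $|\nabla A_\ep|^2 \le \big(h+\ep\big)\,a(x)A_\ep$ (with $h=\frac{2-\alpha}{N-\alpha}$), the last term is $\le \frac{h+\ep}{h+2\ep}a(x)\Phi_\ep \le a(x)\Phi_\ep$, and $\pa_t\Phi_\ep<0$, so the whole coefficient is $\le -2a(x)\Phi_\ep + a(x)\Phi_\ep = -a(x)\Phi_\ep$; integrating gives $\frac{d}{dt}E_1(t;u) \le -\int_\Omega a(x)|u_t|^2\Phi_\ep\,dx = -E_a(t;\pa_t u)$, which is \eqref{e0-1}. (One should double-check that $\pa_t^k u$ is smooth enough for these manipulations, but that is guaranteed by the compatibility condition; here $u$ itself suffices.)

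For part (ii), I would start from Lemma \ref{e1-0} with $\Phi=\Phi_\ep$:
\[
\frac{d}{dt}E_2(t;u) = 2\int_\Omega uu_t\,\pa_t\Phi_\ep\,dx + 2E_{\pa t}(t;u) - 2E_{\pa x}(t;u) + \int_\Omega\big(a(x)\pa_t\Phi_\ep + \Delta\Phi_\ep\big)|u|^2\,dx.
\]
The term $\int_\Omega a(x)\pa_t\Phi_\ep|u|^2\,dx \le 0$ and is dropped. For $\int_\Omega \Delta\Phi_\ep |u|^2\,dx$, a direct computation gives $\Delta\Phi_\ep = \big(\Delta\log\Phi_\ep + |\nabla\log\Phi_\ep|^2\big)\Phi_\ep$; the $|\nabla\log\Phi_\ep|^2$ part is handled by \eqref{A2} as above and can be absorbed, while $\Delta\log\Phi_\ep = \frac{1}{h+2\ep}\frac{\Delta A_\ep}{1+t} \le \frac{1+\ep}{h+2\ep}\frac{a(x)}{1+t}$ by \eqref{ell.aux}; so this contribution is bounded by a constant times $\frac{1}{1+t}E_a(t;u)$, which by Lemma \ref{lem_ha} (the Hardy-type inequality \eqref{hardy}) is in turn $\le C\,E_{\pa x}(t;u)$ — this is how one improves the $-2E_{\pa x}$ coefficient to the displayed $-\frac{1-3\ep}{1-\ep}E_{\pa x}$. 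The remaining two terms, $2\int uu_t\pa_t\Phi_\ep\,dx$ and $2E_{\pa t}(t;u)$, are the genuinely positive contributions and must be dominated by the damping term $E_a(t;\pa_t u)$: for $2E_{\pa t}(t;u)$ use \eqref{E12-F} directly to get the $\frac{2}{a_1}(R_0+1+t)^\alpha E_a(t;\pa_t u)$ piece; for $2\int uu_t\pa_t\Phi_\ep\,dx$, write $|\pa_t\Phi_\ep| = \frac{1}{h+2\ep}\frac{A_\ep}{(1+t)^2}\Phi_\ep$, bound $2|uu_t| \le \ep\frac{a(x)}{A_\ep}(1+t)|u|^2\cdot\frac{1}{?} + \ldots$ — more precisely split it with a weight to produce, on one side, a multiple of $\frac{1}{1+t}E_a(t;u)$ (again absorbed into $E_{\pa x}$ via \eqref{hardy}) and, on the other, a multiple of $\int_\Omega \frac{A_\ep}{a}|u_t|^2\Phi_\ep\,dx$, which is controlled by \eqref{A/a} and then by \eqref{E12-F}, yielding the $\frac{A_{2\ep}(R_0+1)^2}{\ep a_1^2}(R_0+1+t)^\alpha E_a(t;\pa_t u)$ piece. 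Collecting everything gives \eqref{e1-1}.

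The main obstacle is the careful bookkeeping in part (ii): one must choose the Cauchy–Schwarz splitting parameter in $2\int uu_t\pa_t\Phi_\ep\,dx$ so that exactly the factor $\ep$ appears in the denominator of the $E_a(t;\pa_t u)$ coefficient and so that the leftover $E_a(t;u)/(1+t)$ terms, after applying \eqref{hardy}, combine with the $-2E_{\pa x}$ and the $\Delta\log\Phi_\ep$ contribution to leave precisely $-\frac{1-3\ep}{1-\ep}E_{\pa x}(t;u)$ and not merely $-cE_{\pa x}$ for some unspecified $c>0$; tracking the constants $\frac{1\pm\ep}{h+2\ep}$ versus $\frac{1-\ep}{h+2\ep}$ in \eqref{hardy} is where a sign or factor error would most easily creep in. Everything else is a routine application of the quoted identities and inequalities, and the restriction $\ep\in(0,\frac13)$ is exactly what makes $\frac{1-3\ep}{1-\ep}>0$.
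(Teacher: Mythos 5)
Part (i) of your argument is correct and coincides with the paper's proof: discard the nonpositive square term in Lemma \ref{e0-0} and bound the coefficient of $|u_t|^2$ by $-a(x)\Phi_\ep$ using \eqref{A2}. The gap is in part (ii), and it is the decision to drop $\int_\Omega a(x)\pa_t\Phi_\ep|u|^2\,dx$ at the outset because it is nonpositive. That term is not a nuisance to be discarded; it is the resource that makes the rest of the estimate close. In the paper one computes
\[
a(x)\pa_t\Phi_\ep+\Delta\Phi_\ep
\leq
\left(-\frac{\ep}{(h+2\ep)^2}\,\frac{a(x)A_\ep(x)}{(1+t)^2}
+\frac{1+\ep}{h+2\ep}\,\frac{a(x)}{1+t}\right)\Phi_\ep,
\]
i.e.\ after the $|\nabla\log\Phi_\ep|^2$ part of $\Delta\Phi_\ep$ is absorbed into $a\pa_t\Phi_\ep$ via \eqref{A2}, a \emph{surplus} negative term $-\frac{\ep}{(h+2\ep)^2}\frac{aA_\ep}{(1+t)^2}|u|^2\Phi_\ep$ survives. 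The cross term is then split by Cauchy--Schwarz with one factor of $A_\ep$ on each side,
\[
2\Big|\!\int_\Omega uu_t\,\pa_t\Phi_\ep\,dx\Big|
\leq
\frac{\ep}{(h+2\ep)^2}\frac{1}{(1+t)^2}\int_\Omega a A_\ep|u|^2\Phi_\ep\,dx
+\frac{A_{2\ep}(R_0+1)^2}{\ep a_1}E_{\pa t}(t;u),
\]
and the first summand is cancelled exactly by that surplus. If you have already thrown the surplus away, this cancellation is unavailable, and the $|u|^2$-half of the splitting, $\frac{aA_\ep}{(1+t)^2}|u|^2$, cannot be converted into a bounded multiple of $\frac{a}{1+t}|u|^2$ for an application of \eqref{hardy}: on the light cone $A_\ep(x)\sim A_{2\ep}\lr{x}^{2-\alpha}$ with $\lr{x}\lesssim R_0+1+t$, so $A_\ep/(1+t)$ grows like $(1+t)^{1-\alpha}$ and is unbounded for $\alpha<1$.

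Your alternative splitting (putting the weight $a(1+t)/A_\ep$ on the $|u|^2$ side so that the Hardy inequality applies) does make the $|u|^2$ piece absorbable into $-2E_{\pa x}$, but it shifts both powers of $A_\ep$ onto the $|u_t|^2$ side, giving $\int_\Omega \frac{A_\ep^2}{a(1+t)^3}|u_t|^2\Phi_\ep\,dx$. Estimating $A_\ep/(1+t)\lesssim (1+t)^{1-\alpha}$ and then using \eqref{A/a} and \eqref{E12-F}, this is of order $(1+t)E_a(t;\pa_t u)$, not $(R_0+1+t)^{\alpha}E_a(t;\pa_t u)$. That is strictly weaker than \eqref{e1-1}, and the exponent $\alpha<1$ there is not cosmetic: in Lemma \ref{est}\,(iii) the combination $\nu(t_3+t)^{\lambda+\alpha}E_1+(t_3+t)^{\lambda}E_2$ works precisely because the resulting factor $\nu(\lambda+\alpha)(t_3+t)^{\alpha-1}$ can be made small by taking $t_3$ large; with exponent $1$ in place of $\alpha$ this factor becomes a fixed constant and the absorption into $-\frac{1-3\ep}{1-\ep}E_{\pa x}$ fails. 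So you should retain $\int_\Omega a\pa_t\Phi_\ep|u|^2\,dx$, extract the surplus term above, and use it (rather than \eqref{hardy}) to kill the $|u|^2$-half of the cross term; the rest of your outline then goes through.
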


\begin{proof}
Noting \eqref{A2}, we have
\begin{align*}
&-2a(x)\Phi_\ep+\pa_t\Phi_\ep - (\pa_t\Phi_\ep)^{-1}
|\nabla \Phi_\ep|^2\\
&\quad=
\left(-2a(x)-\frac{A_{\ep}(x)}{(h+2\ep)(1+t)^2}
+\frac{1}{h+2\ep}\frac{|\nabla A_\ep(x)|^2}{A_\ep(x)}\right)\Phi_\ep
\\
&\quad\leq 
\left(
-2a(x)
+\frac{h+\ep}{h+2\ep}a(x)\right)\Phi_\ep
\\
&\quad\leq 
-
a(x)\Phi_\ep.
\end{align*}
This implies \eqref{e0-1}. On the other hand, 
from \eqref{A2} and \eqref{ell.aux} we see
\begin{align*}
a(x)\pa_t\Phi_\ep+\Delta\Phi_\ep
&=
\frac{1}{h+2\ep}
\left(-\frac{a(x)A_{\ep}(x)}{(1+t)^2}
+\frac{|\nabla A_\ep(x)|^2}{(h+2\ep)(1+t)^2}
+\frac{\Delta A_{\ep}(x)}{1+t}
\right)\Phi_\ep
\\
&\leq 
\frac{1}{h+2\ep}
\left(-\frac{a(x)A_{\ep}(x)}{(1+t)^2}
+\frac{(h+\ep)a(x)A_{\ep}(x)}{(h+2\ep)(1+t)^2}
+\frac{(1+\ep)a(x)}{1+t}
\right)\Phi_\ep
\\
&
\leq
\left(
-\frac{\ep}{(h+2\ep)^2}\,\frac{a(x)A_{\ep}(x)}{(1+t)^2}
+\frac{1+\ep}{h+2\ep}\,\frac{a(x)}{1+t}
\right)
\Phi_\ep. 
\end{align*}
Therefore combining it with Lemma \ref{lem_ha}, 
we have 
\begin{align*}
&\int_{\Omega} \big(a(x)\pa_t\Phi_\ep+\Delta\Phi_\ep\big)|u|^2\,dx\\
&\quad \leq
\frac{1+\ep}{1-\ep}\int_{\Omega} |\nabla u|^2\Phi_\ep\,dx
-\frac{\ep}{(h+2\ep)^2}\,
\frac{1}{(1+t)^2}
\int_{\Omega} a(x)A_\ep(x)|u|^2\Phi_\ep\,dx.
\end{align*}
Using \eqref{A/a}, we have 
\begin{align*}
&2\int_{\Omega} uu_{t} (\pa_t\Phi_\ep)\,dx \\
&\quad =
-\frac{2}{h+2\ep}\frac{1}{(1+t)^2}\int_{\Omega} uu_{t} A_\ep(x)\Phi_\ep\,dx
\\
&\quad \leq
\frac{2}{h+2\ep}\frac{1}{(1+t)^2}
\left(\int_{\Omega} a(x)A_\ep(x)|u|^2\Phi_\ep\,dx\right)^\frac{1}{2}
\left(\int_{\Omega} \frac{A_\ep(x)}{a(x)}|u_{t}|^2 \Phi_\ep \,dx\right)^\frac{1}{2}
\\
&\quad \leq
\frac{2(R_0+1)}{h+2\ep}
\frac{1}{1+t}
\left(\int_{\Omega} a(x)A_\ep(x)|u|^2\Phi_\ep\,dx\right)^\frac{1}{2}
\left(
\frac{A_{2\ep}}{a_1}
E_{\pa t}(t;u)
\right)^\frac{1}{2}
\\
&\quad \leq
\frac{\ep}{(h+2\ep)^2}\,\frac{1}{(1+t)^2}
\int_{\Omega} a(x)A_\ep(x)|u|^2\Phi_\ep\,dx
+
\frac{A_{2\ep}(R_0+1)^2}{\ep a_1}
E_{\pa t}(t;u).
\end{align*}
Applying \eqref{E12-F}, we obtain \eqref{e1-1}.
\end{proof}

\begin{lemma}\label{est}
The following assertions hold:

\smallskip

\noindent{\bf (i)} 
Set 
$t_*(R_0,\alpha,m):=
\max\left\{\left(\frac{2m}{a_1}\right)^{\frac{1}{1-\alpha}},R_0+1\right\}$. 
Then for every $t,m\geq 0$ and $t_1\geq t_*(R_0,\alpha,m)$,  
\begin{align}
\label{e1-m}
\frac{d}{dt}
\Big((t_1+t)^{m}E_1(t;u)\Big)
\leq 
m
(t_1+t)^{m-1}E_{\pa x}(t;u)
- \frac{1}{2}(t_1+t)^{m}E_a(t;\pa_t u).
\end{align}
\noindent{\bf (ii)} 
for every $t,\lambda\geq 0$ and $t_2\geq R_0+1$,  
\begin{align}
\nonumber
&\frac{d}{dt}\Big((t_2+t)^{\lambda}E_2(t;u)\Big) \\
\nonumber
&\quad\leq 
\lambda(1+\ep)(t_2+t)^{\lambda-1}
E_a(t;u)
-\frac{1-3\ep}{1-\ep}
(t_2+t)^{\lambda}E_{\pa x}(t;u)
\\
\label{e2-l}
&\qquad
+
\left(
\frac{2}{a_1}+\frac{A_{2\ep}(R_0+1)^2}{\ep a_1^2}+
\frac{\lambda}{2\ep a_1^2 t_2^{1-\alpha}}
\right)(t_2+t)^{\lambda+\alpha}E_a(t;\pa_t u).
\end{align}
\noindent{\bf (iii)} 
In particular, setting 
\begin{align*}
\nu
&:=
\frac{4}{a_1}+\frac{2A_{2\ep}(R_0+1)^2}{\ep a_1^2}+
\frac{1}{4\ep a_1}, 
\\
t_{**}(\ep,R_0,\alpha,\lambda)
&:=
\max\left\{ \left(\frac{(1-\ep)(\lambda+\alpha)\nu}{\ep}\right)^{\frac{1}{1-\alpha}},\left(\frac{2(\lambda+\alpha)}{a_1}\right)^{\frac{1}{1-\alpha}},R_0+1\right\},
\end{align*}
one has that for $t,\lambda\geq 0$ and 
$t_3\geq t_{**}(\ep,R_0,\alpha,\lambda)$,  
\begin{align}
\nonumber
&\frac{d}{dt}
\Big(
\nu (t_3+t)^{\lambda+\alpha}E_1(t;u)
+(t_3+t)^{\lambda}E_2(t;u)
\Big)
\\
\label{e3-l}
&\quad \leq 
-\frac{1-4\ep}{1-\ep}
(t_3+t)^{\lambda}E_{\pa x}(t;u)
+
\lambda(1+\ep)(t_3+t)^{\lambda-1}
E_a(t;u).
\end{align}
\end{lemma}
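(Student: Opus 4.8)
The plan is to treat all three parts by the same mechanism: differentiate the weighted functional, substitute the differential inequalities \eqref{e0-1} (resp.\ \eqref{e1-1}) for $E_1$ (resp.\ $E_2$), and then absorb the extra terms created by differentiating the polynomial weight $(t_i+t)^{m}$ into the available negative terms, using the elementary estimates \eqref{E12-F}, \eqref{A/a}, \eqref{E21-F} and the finite speed of propagation. Two reductions will be used throughout: since $t_i\ge R_0+1$ one may replace $R_0+1+t$ by $t_i+t$, and since $\alpha<1$ one may bound $(t_i+t)^{\gamma}\le t_i^{\gamma}$ whenever $\gamma<0$.

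For (i) I would expand $\frac{d}{dt}\big((t_1+t)^{m}E_1(t;u)\big)=m(t_1+t)^{m-1}E_1(t;u)+(t_1+t)^{m}\frac{d}{dt}E_1(t;u)$, bound the last term by $-(t_1+t)^{m}E_a(t;\pa_t u)$ using \eqref{e0-1}, split $E_1=E_{\pa x}+E_{\pa t}$, and estimate $m(t_1+t)^{m-1}E_{\pa t}(t;u)\le\frac{m}{a_1}(t_1+t)^{m-1+\alpha}E_a(t;\pa_t u)$ via \eqref{E12-F} (with $R_0+1+t\le t_1+t$); the hypothesis $t_1\ge(\tfrac{2m}{a_1})^{1/(1-\alpha)}$ makes $\tfrac{m}{a_1}(t_1+t)^{\alpha-1}\le\tfrac12$, so this extra term is swallowed by half of $(t_1+t)^{m}E_a(t;\pa_t u)$, giving \eqref{e1-m}. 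For (ii) I would do the same with \eqref{e1-1} and $E_2=E_*+E_a$: keep $\lambda(t_2+t)^{\lambda-1}E_a(t;u)$ as it is; for the cross term $\lambda(t_2+t)^{\lambda-1}E_*(t;u)$ use \eqref{E21-F} then \eqref{E12-F} and Young's inequality with the parameter tuned so that its $E_a(t;u)$-piece equals $\lambda\ep(t_2+t)^{\lambda-1}E_a(t;u)$ (so the two $E_a(t;u)$-contributions add to $\lambda(1+\ep)(t_2+t)^{\lambda-1}E_a(t;u)$), the remaining piece carrying weight $(t_2+t)^{\lambda-1+2\alpha}\le t_2^{\alpha-1}(t_2+t)^{\lambda+\alpha}$; together with the $(R_0+1+t)^{\alpha}\le(t_2+t)^{\alpha}$ term from \eqref{e1-1} this yields \eqref{e2-l}.

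For (iii) the plan is to apply (i) with $m=\lambda+\alpha$ and $t_1=t_3$ (admissible since $t_3\ge t_{**}\ge\max\{(\tfrac{2(\lambda+\alpha)}{a_1})^{1/(1-\alpha)},R_0+1\}=t_*(R_0,\alpha,\lambda+\alpha)$), multiply it by $\nu$, and add it to \eqref{e2-l} with $t_2=t_3$. The $E_a(t;\pa_t u)$-terms then combine into $\big(-\tfrac{\nu}{2}+\tfrac{2}{a_1}+\tfrac{A_{2\ep}(R_0+1)^2}{\ep a_1^2}+\tfrac{\lambda}{2\ep a_1^2 t_3^{1-\alpha}}\big)(t_3+t)^{\lambda+\alpha}E_a(t;\pa_t u)$; using $t_3^{1-\alpha}\ge\tfrac{2(\lambda+\alpha)}{a_1}$ the last summand is at most a fixed multiple of $\tfrac{1}{\ep a_1}$, and $\nu$ is chosen exactly large enough that this whole coefficient is $\le 0$. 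The only positive term left from (i) is $\nu(\lambda+\alpha)(t_3+t)^{\lambda+\alpha-1}E_{\pa x}(t;u)$, and $t_3^{1-\alpha}\ge\tfrac{(1-\ep)(\lambda+\alpha)\nu}{\ep}$ makes it $\le\tfrac{\ep}{1-\ep}(t_3+t)^{\lambda}E_{\pa x}(t;u)$; subtracting this from the $-\tfrac{1-3\ep}{1-\ep}(t_3+t)^{\lambda}E_{\pa x}(t;u)$ of \eqref{e2-l} gives $-\tfrac{1-4\ep}{1-\ep}(t_3+t)^{\lambda}E_{\pa x}(t;u)$, and the untouched $\lambda(1+\ep)(t_3+t)^{\lambda-1}E_a(t;u)$ completes \eqref{e3-l}.

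The argument is essentially bookkeeping, and I expect the only step needing genuine care is the cross term $E_*$ in part (ii): after the Cauchy--Schwarz inequality it brings in $E_{\pa t}(t;u)$, and converting this to $E_a(t;\pa_t u)$ through \eqref{E12-F} costs a full extra power $(R_0+1+t)^{\alpha}$, so the natural weight on that term is $(t_2+t)^{\lambda-1+2\alpha}$ rather than the $(t_2+t)^{\lambda+\alpha}$ one wants. Trading the excess $(t_2+t)^{\alpha-1}$ for $t_2^{\alpha-1}$ (legitimate because $\alpha<1$) is what produces the $t_2^{-(1-\alpha)}$ in \eqref{e2-l}, and hence the requirement that $t_3$ be as large as $t_{**}$ in part (iii); getting the thresholds in $t_*$, $t_{**}$ and the constant $\nu$ aligned so that every absorption goes through is the delicate point.
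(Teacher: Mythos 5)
Your proposal is correct and follows essentially the same route as the paper: expand the derivative of the weighted functional, insert \eqref{e0-1} and \eqref{e1-1}, convert $E_{\pa t}$ and $E_*$ via \eqref{E12-F} and \eqref{E21-F} with Young's inequality tuned to produce the $\lambda\ep$-piece, replace $R_0+1+t$ by $t_i+t$ and $(t_i+t)^{\alpha-1}$ by $t_i^{\alpha-1}$, and choose $\nu$, $t_*$, $t_{**}$ so that every absorption closes. The one delicate point you flag (the $(t_2+t)^{\lambda-1+2\alpha}$ weight on the cross term and the resulting $t_2^{-(1-\alpha)}$ factor) is handled exactly as in the paper.
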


\begin{proof}
{\bf (i)} 
Let $m\geq 0$ be fixed and let $t_1\geq t_{*}(R_0,\alpha,m)$. 
Using \eqref{e0-1} and \eqref{E12-F}, we have 
\begin{align*}
&(t_1+t)^{-m}
\frac{d}{dt}
\Big((t_1+t)^{m}E_1(t;u)\Big) \\
&\quad \leq 
\frac{m}{t_1+t}
E_{\pa x}(t;u) 
+
\frac{m}{t_1+t}
E_{\pa t}(t;u)
+\frac{d}{dt}E_1(t;u)
\\
&\quad \leq 
\frac{m}{t_1+t}
E_{\pa x}(t;u) 
+
\frac{m}{t_1+t}
E_{\pa t}(t;u)-E_a(t;\pa_t u)
\\
&\quad \leq 
\frac{m}{t_1+t}
E_{\pa x}(t;u) 
+ 
\left(\frac{m(R_0+1+t)^{\alpha}}{a_1(t_1+t)}
 - 1\right)E_a(t;\pa_t u).
\end{align*}
Therefore 
we obtain 
\eqref{e1-m}.

\noindent{\bf (ii)}  For $t\geq 0$, 
and $t\geq R_0+1$, 
\begin{align*}
&(t_2+t)^{-\lambda}
\frac{d}{dt}\Big((t_2+t)^{\lambda}E_2(t;u)\Big)
\\
&\quad \leq 
\frac{\lambda}{t_2+t}E_*(t;u)
+ 
\frac{\lambda}{t_2+t}E_a(t;u)+\frac{d}{dt}E_2(t;u)
\\	
&\quad \leq 
\frac{\lambda}{t_2+t}E_*(t;u)
+ 
\frac{\lambda}{t_2+t}E_a(t;u)
-\frac{1-3\ep}{1-\ep}
E_{\pa x}(t;u) \\
&\qquad +\left(\frac{2}{a_1}+\frac{A_{2\ep}(R_0+1)^2}{\ep a_1^2}\right)
(R_0+1+t)^{\alpha}E_a(t;\pa_t u).
\end{align*}
Noting that by \eqref{E21-F} and \eqref{E12-F}, 
\begin{align*}
\frac{\lambda}{t_2+t}E_{*}(t;u)
&\leq 
\frac{2\lambda(R_0+1+t)^{\alpha}}{a_1(t_2+t)}
\sqrt{E_a(t;u)
E_a(t;\pa_t u)}
\\
&\leq 
\frac{\lambda\ep}{t_2+t}
E_a(t;u)
+
\frac{\lambda}{\ep a_1^2}
\frac{(R_0+1+t)^{2\alpha}}{t_2+t}
E_a(t;\pa_t u)
\\
&\leq 
\frac{\lambda\ep}{t_2+t}
E_a(t;u)
+
\frac{\lambda}{\ep a_1^2t_2^{1-\alpha}}(t_2+t)^{\alpha}
E_a(t;\pa_t u),
\end{align*}
we deduce \eqref{e2-l}.

\smallskip 

\noindent{\bf (iii)}  
Combining \eqref{e1-m} with $m=\lambda+\alpha$ 
and \eqref{e2-l}, we have
for $t_3\geq t_{**}(\ep,R_0,\alpha,\lambda)$ and $t\geq 0$, 
\begin{align*}
&\frac{d}{dt}
\Big(
\nu(t_3+t)^{\lambda+\alpha}E_1(t;u)
+(t_3+t)^{\lambda}E_2(t;u)
\Big)
\\
&\leq 
\left(
\nu(\lambda+\alpha)(t_3+t)^{\alpha-1}
-\frac{1-3\ep}{1-\ep}
\right)(t_3+t)^{\lambda}E_{\pa x}(t;u)
+\lambda(1+\ep)(t_3+t)^{\lambda-1}
E_a(t;u)
\\
&\quad
+\left(
\frac{2}{a_1}+\frac{A_{2\ep}(R_0+1)^2}{\ep a_1^2}+
\frac{\lambda}{2\ep a_1^2 t_3^{1-\alpha}}
- \frac{\nu}{2}\right)(t_3+t)^{\lambda+\alpha}E_a(t;\pa_t u)
\\
&\leq 
-
\frac{1-4\ep}{1-\ep}
(t_3+t)^{\lambda}E_{\pa x}(t;u)
+\lambda(1+\ep)(t_3+t)^{\lambda-1}
E_a(t;u).
\end{align*}
This proves the assertion.
\end{proof}

\begin{proof}[Proof of Proposition \ref{main}]
Firstly, by \eqref{E21-F} we observe that 
\begin{align*}
\nu (t_3+t)^{\alpha}E_1(t;u)
+E_2(t;u)
&\geq 
\frac{4}{a_1}(t_3+t)^{\alpha}E_1(t;u)
-
|E_*(t;u)|+E_a(t;u)
\\
&\geq 
\frac{4}{a_1}(t_3+t)^{\alpha}E_{\pa t}(t;u) \\
&\quad -
\frac{2}{\sqrt{a_1}}(t_3+t)^{\frac{\alpha}{2}}
\sqrt{E_a(t;u)E_{\pa t}(t;u)}
+E_a(t;u)
\\
&\geq \frac{3
}{4}E_a(t;u).
\end{align*}
By using the above estimate, 
we prove the assertion via mathematical induction. 

\noindent
{\bf Step 1 ($k=0$).}\ 
By \eqref{e3-l} 
using Lemma \ref{lem_ha} implies that 
\begin{align*}
&\frac{d}{dt}
\Big(
\nu (t_3+t)^{\lambda+\alpha}E_1(t;u)
+(t_3+t)^{\lambda}E_2(t;u)
\Big) \\
&\quad \leq 
\left(
-\frac{1-4\ep}{1-\ep}
+
\frac{\lambda(1+\ep)(h+2\ep)}{1-\ep}
\right)
(t_3+t)^{\lambda}E_{\pa x}(t;u).
\end{align*}
Therefore taking 
$\lambda_0=\frac{(1-\ep)(1-4\ep)}{(1+\ep)(h+2\ep)}$, 
($\lambda_0 \uparrow h^{-1}$ as $\ep\downarrow 0$)
we have
\begin{align*}
\frac{d}{dt}
\Big(
\nu (t_3+t)^{\lambda_0+\alpha}E_1(t;u)
+(t_3+t)^{\lambda_0}E_2(t;u)
\Big)
\leq 
-\frac{\ep (1-4\ep)}{1-\ep}
(t_3+t)^{\lambda_0}E_{\pa x}(t;u).
\end{align*}
Integrating over $(0,t)$ with respect to $t$, we see
\begin{align*}
&\frac{3}{4}(t_3+t)^{\lambda_0}E_a(t;u)
+
\frac{\ep(1-4\ep)}{1-\ep}
\int_{0}^t(t_3+s)^{\lambda_0}E_{\pa x}(s;u)\,ds
\\
&\leq
\nu (t_3+t)^{\lambda_0+\alpha}E_1(t;u)
+(t_3+t)^{\lambda_0}E_2(t;u)
+
\frac{\ep(1-4\ep)}{1-\ep}
\int_{0}^t(t_3+s)^{\lambda_0}E_{\pa x}(s;u)\,ds
\\
&
\leq 
\nu t_3^{\lambda_0+\alpha}E_1(0;u)
+t_3^{\lambda_0}E_2(0;u).
\end{align*}
Using \eqref{e1-m} with $m=\lambda_0+1$ 
and integrating over $(0,t)$, we obtain
\begin{align*}
&(t_3+t)^{\lambda_0+1}E_1(t;u)
+\frac{1}{2}
\int_{0}^t(t_3+s)^{\lambda_0+1}E_a(s;\pa_t u)\,ds
\\
&\quad \leq 
t_3^{\lambda_0+1}E_1(0;u)
+
(\lambda_0+1)
\int_0^t
(t_3+s)^{\lambda_0}E_{\pa x}(s;u)\,ds
\\
&\quad \leq
t_3^{\lambda_0+1}E_1(0;u)+\frac{(\lambda_0+1)(1-\ep)}
{\ep(1-4\ep)}
\Big(
\nu t_3^{\lambda_0+\alpha}E_1(0;u)
+t_3^{\lambda_0}E_2(0;u)
\Big).
\end{align*}
This proves the desired assertion with $k=0$ 
and also the integrability of $(t_3+s)^{\lambda_0+1}E_a(s;\pa_t u)$.

\smallskip 

\noindent{\bf Step 2 ($1<k\leq k_0-1$).}\ 
Suppose that 
for every $t\geq 0$, 
\[
(1+t)^{\lambda_0+2k-1}
E_{1}(t;\pa_t^{k-1}u)
+
(1+t)^{\lambda_0+2k-2}E_a(t;\pa_t^{k-1}u)
\leq M_{\ep,k-1}\|(u_0,u_1)\|_{H^{k}\times H^{k-1}(\Omega)}^2
\]
and additionally, 
\[
\int_{0}^t(1+s)^{\lambda_0+2k-1}E_a(s;\pa_t^{k}u)\,ds
\leq M_{\ep, k-1}'\|(u_0,u_1)\|_{H^{k}\times H^{k-1}(\Omega)}^2.
\]
Since the initial value $(u_0,u_1)$ 
satisfies the compatibility condition
of order $k$, 
$\pa_t^{k} u$ is also a solution of 
\eqref{dw} with replaced $(u_0,u_1)$ with $(u_{k-1},u_{k})$.
Applying \eqref{e3-l} with $\lambda=\lambda_0+2k$,  
putting $t_{3k}=t_{**}(\ep,R_0,\alpha,\lambda_0+2k)$ 
(see Lemma \ref{est}\ {\bf (iii)})
and integrating over $(0,t)$, we have
\begin{align*}
&
\frac{3}{4}
(t_{3k}+t)^{\lambda_0+2k}E_a(t;\pa_t^{k} u)
+\frac{1-4\ep}{1-\ep}
\int_0^t(t_{3k}+s)^{\lambda_0+2k}E_{\pa x}(s;\pa_t^{k} u)\,ds
\\
&\quad\leq 
\nu (t_{3k}+t)^{\lambda_0+2k+\alpha}E_1(t;\pa_t^{k} u)
+(t_{3k}+t)^{\lambda_0+2k}E_2(t;\pa_t^{k} u) \\
&\qquad +\frac{1-4\ep}{1-\ep}
\int_0^t(t_{3k}+s)^{\lambda_0+2k}E_{\pa x}(s;\pa_t^{k} u)\,ds
\\
&\quad \leq 
\nu t_{3k}^{\lambda_0+2k+\alpha}E_1(0;\pa_t^{k} u)
+t_{3k}^{\lambda_0+2k}E_2(0;\pa_t^{k} u) \\
&\qquad +
(\lambda_0+2k)(1+\ep)
\int_0^t(t_{3k}+s)^{\lambda_0+2k-1}
E_a(s;\pa_t^{k} u)\,ds
\\
&\quad \leq 
\nu t_{3k}^{\lambda_0+2k+\alpha}E_1(0;\pa_t^{k} u)
+t_{3k}^{\lambda_0+2k-1}E_2(0;\pa_t^{k} u) \\
&\qquad +
(\lambda_0+2k)(1+\ep)
M_{\ep, k-1}'\|(u_0,u_1)\|_{H^{k}\times H^{k-1}(\Omega)}^2.
\end{align*}
Moreover, from \eqref{e1-m} with $m=\lambda_0+2k+1$
we have 
\begin{align*}
&
(t_{3k}+t)^{\lambda_0+2k+1}E_1(t;\pa_t^{k}u)
+
\frac{1}{2}
\int_0^t(t_{3k}+s)^{\lambda_0+2k+1}E_a(s;\pa_t^{k+1}u)
\,ds
\\
&\quad \leq 
t_{3k}^{\lambda_0+2k+1}E_1(0;\pa_t^{k}u)
+
(\lambda_0+2k+1)
\int_0^t
(t_{3k}+s)^{\lambda_0+2k}E_{\pa x}(s;\pa_t^{k}u)
\,ds
\\
&\quad \leq M''_{\ep,k}
\Big(
E_1(0;\pa_t^{k} u)+E_2(0;\pa_t^{k} u)+\|(u_0,u_1)\|_{H^{k}\times H^{k-1}(\Omega)}^2
\Big)
\end{align*}
with some constant $M''_{\ep,k}>0$.
By induction we obtain the desired inequalities for all $k\leq k_0-1$. 
\end{proof}

\section{Diffusion phenomena as an application of weighted energy estimates}

\begin{proposition}\label{dp}
Assume that $(u_0,u_1)\in (H^2\cap H^1_0(\Omega))\times H^1_0(\Omega)$ 
and suppose that ${\rm supp}\,(u_0,u_1)\subset \overline{B}(0,R_0)$. 
Let $u$ be the solution of \eqref{dw}. Then for every 
$\ep>0$, there exists a constant $C_{\ep,R_0}>0$ such that 
\[
\Big\|u(\cdot,t) - e^{tL_{\ast}}[ u_0 + a(\cdot)^{-1}u_1]\Big\|_{L^2_{d\mu}}
\leq 
C_{\ep,R_0}(1+t)^{-\frac{N-\alpha}{2(2-\alpha)}-\frac{1-\alpha}{2-\alpha}+\ep}
\|(u_0,u_1)\|_{H^{2}\times H^1}.
\]
\end{proposition}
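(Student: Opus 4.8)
The plan is to compare $u(\cdot,t)$ with the parabolic profile $v(\cdot,t):=e^{tL_*}[u_0+a(\cdot)^{-1}u_1]$ and to control the error $w:=u-v$. Since $v$ solves $a v_t-\Delta v=0$ with $v=0$ on $\pa\Omega$ while $u$ solves \eqref{dw}, the function $w$ satisfies $a w_t-\Delta w=-u_{tt}$ in $\Omega$, $w=0$ on $\pa\Omega$, $w(\cdot,0)=-a(\cdot)^{-1}u_1$, i.e.\ $w_t=L_*w-a(\cdot)^{-1}u_{tt}$. By Duhamel's formula and an integration by parts in $s$ (using $a^{-1}u_{tt}=\pa_s(a^{-1}u_t)$ and $\pa_s e^{(t-s)L_*}=-L_*e^{(t-s)L_*}$, the resulting singular integral being convergent near $s=t$), the spurious term $e^{tL_*}[a^{-1}u_1]$ cancels and one is left with
\[
 w(t)=-a(\cdot)^{-1}u_t(t)-\int_0^t L_*e^{(t-s)L_*}\big[a(\cdot)^{-1}u_t(s)\big]\,ds,
\]
which is the representation I would estimate in $L^2_{d\mu}$. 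Write $\gamma:=\frac{N-\alpha}{2(2-\alpha)}+\frac{1-\alpha}{2-\alpha}$ for the rate in the statement; it suffices to bound each term by $C_{\delta,R_0}(1+t)^{-\gamma+\delta}\|(u_0,u_1)\|_{H^2\times H^1}$ for arbitrary $\delta>0$.

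Two energy inputs would be used: (i) Proposition \ref{main} with $k=0$, giving $E_{\pa t}(s;u)\le C_{\delta,R_0}(1+s)^{-\frac{N-\alpha}{2-\alpha}-1+\delta}\|(u_0,u_1)\|_{H^1\times L^2}^2$; and (ii) the same estimate for $\pa_t u$, which is an energy-class solution of \eqref{dw} with the compactly supported data $(u_1,u_2)$, $u_2:=\Delta u_0-a u_1\in L^2(\Omega)$. Since Step~1 ($k=0$) of the proof of Proposition \ref{main} uses only Lemmas \ref{fp}, \ref{e0-0} and \ref{e1-0} together with the finiteness of $E_1(0;\cdot)+E_2(0;\cdot)$, none of which requires the compatibility condition, it applies to $\pa_t u$ and yields $E_{\pa t}(s;\pa_t u)\le C_{\delta,R_0}(1+s)^{-\frac{N-\alpha}{2-\alpha}-1+\delta}\|(u_0,u_1)\|_{H^2\times H^1}^2$ (using $\|u_2\|_{L^2}\le C(\|u_0\|_{H^2}+\|u_1\|_{L^2})$). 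Next, from \eqref{A1} one has $\Phi_\ep(x,s)^{-1}\le\exp\!\big(-A_{1\ep}\lr{x}^{2-\alpha}/((h+2\ep)(1+s))\big)$, whence $\sup_{x}\lr{x}^{\alpha}\Phi_\ep(x,s)^{-1}\le C_\ep(1+s)^{\frac{\alpha}{2-\alpha}}$ and $\int_\Omega\Phi_\ep(x,s)^{-1}\,dx\le C_\ep(1+s)^{\frac{N}{2-\alpha}}$. Combined with $a(x)^{-1}\le a_1^{-1}\lr{x}^{\alpha}$ and the compact support of $u_t(\cdot,s)$ (Lemma \ref{fp}), these give
\[
 \|a^{-1}u_t(s)\|_{L^2_{d\mu}}^2=\int_\Omega a^{-1}|u_t|^2\,dx\le C_\ep(1+s)^{\frac{\alpha}{2-\alpha}}E_{\pa t}(s;u),\qquad \|a^{-1}u_t(s)\|_{L^1_{d\mu}}=\int_\Omega|u_t|\,dx\le \Big(\int_\Omega\Phi_\ep(\cdot,s)^{-1}\Big)^{1/2}E_{\pa t}(s;u)^{1/2},
\]
together with the analogous inequalities with $u_t$ replaced by $u_{tt}$ and $E_{\pa t}(s;u)$ by $E_{\pa t}(s;\pa_t u)$. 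A routine bookkeeping of exponents then yields $\|a^{-1}u_t(s)\|_{L^2_{d\mu}}\le C(1+s)^{-\gamma+\delta/2}\|(u_0,u_1)\|_{H^1\times L^2}$, $\|a^{-1}u_{tt}(s)\|_{L^2_{d\mu}}\le C(1+s)^{-\gamma+\delta/2}\|(u_0,u_1)\|_{H^2\times H^1}$, and $\|a^{-1}u_t(s)\|_{L^1_{d\mu}}\le C(1+s)^{-\frac{1-\alpha}{2-\alpha}+\delta/2}\|(u_0,u_1)\|_{H^1\times L^2}$. I would also use that $-L_*$ is nonnegative self-adjoint on $L^2_{d\mu}$, hence $\|L_*e^{\tau L_*}\|_{\mathcal B(L^2_{d\mu})}\le C\tau^{-1}$, together with \eqref{1-3}.

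With these in hand, for $t\ge2$ the first term $a^{-1}u_t(t)$ is already of order $(1+t)^{-\gamma+\delta/2}\|(u_0,u_1)\|_{H^1\times L^2}$. For the Duhamel integral I would split $\int_0^t=\int_0^{t/2}+\int_{t/2}^{t-1}+\int_{t-1}^t$. On $(0,t/2)$, where $t-s\ge t/2$, \eqref{1-3} and the $L^1_{d\mu}$-bound give a contribution $\le C t^{-\frac{N-\alpha}{2(2-\alpha)}-1}\int_0^{t/2}(1+s)^{-\frac{1-\alpha}{2-\alpha}+\delta/2}\,ds\,\|(u_0,u_1)\|_{H^1\times L^2}\le C(1+t)^{-\gamma+\delta/2}\|(u_0,u_1)\|_{H^1\times L^2}$, since $\int_0^{t/2}(1+s)^{-\frac{1-\alpha}{2-\alpha}}\,ds\le C(1+t)^{\frac1{2-\alpha}}$ and $-\frac{N-\alpha}{2(2-\alpha)}-1+\frac1{2-\alpha}=-\gamma$. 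On $(t/2,t-1)$ use $\|L_*e^{(t-s)L_*}\|_{\mathcal B(L^2_{d\mu})}\le C(t-s)^{-1}$ and the $L^2_{d\mu}$-bound (with $1+s\sim1+t$), getting $\le C\big(\int_{t/2}^{t-1}\frac{ds}{t-s}\big)(1+t)^{-\gamma+\delta/2}\|(u_0,u_1)\|_{H^2\times H^1}\le C_\delta(1+t)^{-\gamma+\delta}\|(u_0,u_1)\|_{H^2\times H^1}$, the logarithm being absorbed into $\delta$. On $(t-1,t)$ write $a^{-1}u_t(s)=a^{-1}u_t(t)+a^{-1}(u_t(s)-u_t(t))$: the first part telescopes, $\int_{t-1}^t L_*e^{(t-s)L_*}\,ds\,[a^{-1}u_t(t)]=(e^{L_*}-I)[a^{-1}u_t(t)]$, of norm $\le2\|a^{-1}u_t(t)\|_{L^2_{d\mu}}$; for the second, $\|a^{-1}(u_t(s)-u_t(t))\|_{L^2_{d\mu}}\le\int_s^t\|a^{-1}u_{tt}(\sigma)\|_{L^2_{d\mu}}\,d\sigma\le C(t-s)(1+t)^{-\gamma+\delta/2}\|(u_0,u_1)\|_{H^2\times H^1}$, so the factor $(t-s)^{-1}$ from $\|L_*e^{(t-s)L_*}\|_{\mathcal B(L^2_{d\mu})}$ is absorbed and, integrating over the unit interval $(t-1,t)$, this part is again $\le C(1+t)^{-\gamma+\delta/2}\|(u_0,u_1)\|_{H^2\times H^1}$. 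Summing the three pieces and the first term, and using that $\delta>0$ is arbitrary (taking $\delta=\ep$ and the parameter in $\Phi_\ep$ small accordingly), gives the proposition; for $0\le t\le2$ the bound is trivial since $\|w(t)\|_{L^2_{d\mu}}\le\|u(t)\|_{L^2_{d\mu}}+\|v(t)\|_{L^2_{d\mu}}$ is bounded.

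The main obstacle is input (ii): Proposition \ref{main} as stated requires the solution's initial data to be compatible of order at least one, which $(u_1,u_2)$ need not be, so one must verify that its $k=0$ part goes through for the merely energy-class solution $\pa_t u$; the delicate point is that the energy identities of Lemmas \ref{e0-0} and \ref{e1-0} persist for $\pa_t u$, which I would justify by approximating $(u_0,u_1)$ by smoother compactly supported data and passing to the limit using finite speed of propagation. The second delicate point is the near-diagonal regime $s\to t$ of the Duhamel integral, handled above by the splitting at $t-1$ and the Lipschitz-in-time bound $\|a^{-1}(u_t(s)-u_t(t))\|_{L^2_{d\mu}}\le\int_s^t\|a^{-1}u_{tt}(\sigma)\|_{L^2_{d\mu}}\,d\sigma$; this is precisely where the \emph{pointwise} $O((1+s)^{-\gamma})$ control of $a^{-1}u_{tt}(s)$, hence the full $k=0$ estimate for $\pa_t u$ rather than merely an integrated-in-time bound on $E_a(\cdot;\pa_t u)$, is needed.
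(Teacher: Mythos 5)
Your proposal is correct in substance but takes a genuinely different route from the paper at the delicate near-diagonal part of the Duhamel integral. The paper starts from the representation of Lemma \ref{Duhamel}, in which the integration by parts is performed only on $(0,t/2)$: the contribution from $(t/2,t)$ then carries $e^{(t-s)L_*}[a(\cdot)^{-1}u_{tt}(\cdot,s)]$ with no singularity and is estimated via the contraction property of the semigroup together with the \emph{full} $k=1$ weighted energy estimate of Proposition \ref{main}, i.e. $E_{\pa t}(s;\pa_t u)\lesssim(1+s)^{-\lambda_0-3+\delta}$; since that requires compatibility of order $2$, the paper first proves the statement for such data and then removes the restriction by regularizing the data with $(1+\tfrac{1}{n}\mathcal{A})^{-1}$ and a cut-off, passing to the limit in the \emph{final} inequality. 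You instead integrate by parts over all of $(0,t)$, accept the $(t-s)^{-1}$ singularity of $L_*e^{(t-s)L_*}$, and tame it by the splitting at $t-1$, the telescoping identity $\int_{t-1}^tL_*e^{(t-s)L_*}\,ds=e^{L_*}-I$, and the Lipschitz bound on $s\mapsto a^{-1}u_t(\cdot,s)$; this lets you get away with the weaker $k=0$-type decay $E_{\pa t}(s;\pa_t u)\lesssim(1+s)^{-\lambda_0-1+\delta}$ obtained by treating $\pa_t u$ as a solution of \eqref{dw} in its own right, at the price of a logarithm absorbed into $\delta$. Your exponent bookkeeping is correct in all three regimes and reproduces the rate $\frac{N-\alpha}{2(2-\alpha)}+\frac{1-\alpha}{2-\alpha}$. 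The obstacle you flag --- justifying the energy identities of Lemmas \ref{e0-0} and \ref{e1-0} for $\pa_t u$ when $(u_1,u_2)$ lies only in $H^1_0\times L^2$ --- is real and must actually be carried out; the paper's ordering (regularize the data first, so that all energy identities are applied only to order-$2$-compatible solutions, and pass to the limit in the final estimate rather than in the differential identities) resolves the same difficulty with less work and could be adopted verbatim without changing the rest of your argument. One further small point: the improper integral $\int_0^tL_*e^{(t-s)L_*}[a^{-1}u_t(\cdot,s)]\,ds$ produced by your full integration by parts should be defined as the limit of $\int_0^{t-\eta}$ as $\eta\downarrow 0$; your telescoping computation does show this limit exists, but it is worth stating explicitly.
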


To prove Proposition \ref{dp} we use the following lemma stated in 
\cite[Section 4]{So_Wa1}. 
\begin{lemma}\label{Duhamel}
Assume that 
$
	(u_0,u_1) \in (H^2\cap H^1_0(\Omega))
		\times H^1_0(\Omega)
$
and
suppose that ${\rm supp}\,(u_0,u_1)\subset \{ x\in \Omega ; |x| \le R_0 \}$. 
Then for every $t\geq 0$, 
\begin{align}
\nonumber
	u(x,t) - e^{tL_{\ast}}[ u_0 + a(\cdot)^{-1}u_1]
	 &= - \int_{t/2}^t e^{(t-s)L_{\ast}}[ a(\cdot)^{-1}u_{tt}(\cdot, s) ] ds\\
\nonumber
	&\quad -e^{\frac{t}{2}L_{\ast}} [ a(\cdot)^{-1}u_{t}(\cdot, t/2) ] \\
\label{eq_du2}
	&\quad -\int_0^{t/2} L_{\ast}  e^{(t-s)L_{\ast}}[ a(\cdot)^{-1}u_{t}(\cdot, s) ] ds,
\end{align}
where
$L_{*}$
is the (negative) Friedrichs extension of
$-L=-a(x)^{-1}\Delta$ in $L^2_{d\mu}$.
\end{lemma}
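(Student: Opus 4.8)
The plan is to recognize $u$ as the mild solution of an inhomogeneous heat equation driven by $L_*$, write down the corresponding Duhamel (variation-of-constants) formula, and then reorganize it by a single integration by parts in the time variable. Dividing the wave equation $u_{tt}-\Delta u+a(x)u_t=0$ by $a(x)$ and using $a(x)^{-1}\Delta=L$, together with the identification of $L$ with $L_*$ on the appropriate domain, one sees that $u$ solves
\[
\pa_t u - L_* u = -a(\cdot)^{-1}u_{tt},\qquad x\in\Omega,\ t>0,
\]
with $u=0$ on $\pa\Omega$ and $u(\cdot,0)=u_0$, exactly the inhomogeneous version of \eqref{heat_f}. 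The variation-of-constants formula for the analytic semigroup $e^{tL_*}$ then gives
\[
u(\cdot,t)=e^{tL_*}u_0-\int_0^t e^{(t-s)L_*}\big[a(\cdot)^{-1}u_{tt}(\cdot,s)\big]\,ds.
\]

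Next I would subtract $e^{tL_*}[u_0+a(\cdot)^{-1}u_1]$ from both sides and split the integral at $t/2$, which directly produces the first term of \eqref{eq_du2} from the piece over $[t/2,t]$ and leaves the quantity $-e^{tL_*}[a(\cdot)^{-1}u_1]-\int_0^{t/2}e^{(t-s)L_*}[a(\cdot)^{-1}u_{tt}]\,ds$ to be rewritten. Since $a$ is time-independent, $a(\cdot)^{-1}u_{tt}=\pa_s\big[a(\cdot)^{-1}u_t\big]$, so integrating by parts in $s$ and using $\pa_s e^{(t-s)L_*}=-L_* e^{(t-s)L_*}$ yields
\[
\int_0^{t/2}e^{(t-s)L_*}\pa_s\big[a(\cdot)^{-1}u_t\big]\,ds
= e^{\frac{t}{2}L_*}\big[a(\cdot)^{-1}u_t(\cdot,t/2)\big]-e^{tL_*}\big[a(\cdot)^{-1}u_1\big]
+\int_0^{t/2}L_* e^{(t-s)L_*}\big[a(\cdot)^{-1}u_t\big]\,ds.
\]
Substituting this back, the two copies of $e^{tL_*}[a(\cdot)^{-1}u_1]$ cancel, and what remains is precisely $-e^{\frac{t}{2}L_*}[a(\cdot)^{-1}u_t(\cdot,t/2)]-\int_0^{t/2}L_* e^{(t-s)L_*}[a(\cdot)^{-1}u_t]\,ds$, i.e.\ the remaining two terms of \eqref{eq_du2}. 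This is the entire algebraic content of the identity.

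Finally I would justify the manipulations rigorously. Under the hypotheses $(u_0,u_1)\in (H^2\cap H^1_0(\Omega))\times H^1_0(\Omega)$ with compatibility of order one, the solution satisfies $u\in C([0,\infty);H^2\cap H^1_0)\cap C^1([0,\infty);H^1_0)\cap C^2([0,\infty);L^2)$; since $a$ is bounded above and below (using also Lemma \ref{fp} to control the support), the maps $s\mapsto a(\cdot)^{-1}u_{tt}(\cdot,s)$ and $s\mapsto a(\cdot)^{-1}u_t(\cdot,s)$ take values in $L^2_{d\mu}$ and are continuous, respectively $C^1$, in $s$. A key simplification is that on $[0,t/2]$ one has $t-s\ge t/2>0$, so $e^{(t-s)L_*}$ is smoothing and $L_* e^{(t-s)L_*}$ is bounded on $L^2_{d\mu}$ uniformly on this interval; hence $s\mapsto e^{(t-s)L_*}[a(\cdot)^{-1}u_t(\cdot,s)]$ is $C^1$ with values in $L^2_{d\mu}$ and the integration by parts is legitimate with no endpoint singularity. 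The one genuinely delicate point, which I expect to be the main obstacle, is establishing the Duhamel representation itself in $L^2_{d\mu}$ for the unbounded generator $L_*$ with the time-dependent source $-a(\cdot)^{-1}u_{tt}$; I would secure it by first proving the identity for smoother, higher-order compatible data (for which $u$ is classical and the computation is immediate) and then passing to the limit using the stated regularity and the continuity of $e^{tL_*}$ and $L_* e^{(t-s)L_*}$ on the relevant spaces.
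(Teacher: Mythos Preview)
The paper does not give its own proof of this lemma; it merely quotes the statement from \cite[Section 4]{So_Wa1}. Your argument---rewriting the damped wave equation as the inhomogeneous parabolic problem $\pa_t u - L_* u = -a^{-1}u_{tt}$, applying the variation-of-constants formula, splitting the integral at $t/2$, and integrating by parts on $[0,t/2]$ using $a^{-1}u_{tt}=\pa_s(a^{-1}u_t)$ and $\pa_s e^{(t-s)L_*}=-L_* e^{(t-s)L_*}$---is the standard derivation and is exactly what the cited reference does; the algebra you wrote out is correct and the regularity considerations you list (continuity of $s\mapsto a^{-1}u_{tt}$ in $L^2_{d\mu}$, smoothing of $e^{(t-s)L_*}$ for $t-s\ge t/2$) are the right ones to make the integration by parts rigorous.

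One small remark: you do not really need the approximation step you describe at the end. With $(u_0,u_1)\in (H^2\cap H^1_0)\times H^1_0$ one already has $u\in C([0,\infty);H^2\cap H^1_0)\cap C^1([0,\infty);H^1_0)\cap C^2([0,\infty);L^2)$, so $u(\cdot,t)\in D(L_*)$ for every $t$ and $t\mapsto u(\cdot,t)$ is a strict solution of $\pa_t u = L_* u - a^{-1}u_{tt}$ in $L^2_{d\mu}$; the Duhamel identity then follows directly from differentiating $s\mapsto e^{(t-s)L_*}u(\cdot,s)$ and integrating, without passing through higher-order compatible data.
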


\begin{proof}[Proof of Proposition \ref{dp}]
First we show the assertion for $(u_0,u_1)$ 
satisfying the compatibility condition of order $2$. 
Taking $L^2_{d\mu}$-norm of both side, we have
\[
\Big\|u(x,\cdot) - e^{tL_{\ast}}[ u_0 + a(\cdot)^{-1}u_1]\Big\|_{L^2_{d\mu}}
\leq 
\mathcal{J}_1(t)
+\mathcal{J}_2(t)
+\mathcal{J}_3(t),
\]
where
\begin{align*}
\mathcal{J}_1(t)&:=
\int_{t/2}^t 
\big\|e^{(t-s)L_{\ast}}[ a(\cdot)^{-1}u_{tt}(\cdot, s)]\big\|_{L^2_{d\mu}} ds, 
\\
\mathcal{J}_2(t)&:=\big\|e^{\frac{t}{2}L_{\ast}} [ a(\cdot)^{-1}u_{t}(\cdot, t/2)]\big\|_{L^2_{d\mu}},
\\
\mathcal{J}_3(t)&:=\int_0^{t/2} \big\|L_{\ast}  e^{(t-s)L_{\ast}}[ a(\cdot)^{-1}u_{t}(\cdot, s)]\big\|_{L^2_{d\mu}} ds.
\end{align*}
Noting that for $x\in \Omega$, 
\[
a(x)^{-1}\Phi_\ep(x,t)^{-1}\leq 
\frac{1}{a_1}\lr{x}^{\alpha}
\exp\left(-\frac{A_{1\ep}}{h+2\ep}\,\frac{\lr{x}^{2-\alpha}}{1+t}\right)
\leq 
\frac{1}{a_1}
\left(\frac{\alpha(h+2\ep)}{(2-\alpha)eA_{1\ep}}\right)^{\frac{\alpha}{2-\alpha}}
(1+t)^{\frac{\alpha}{2-\alpha}},
\]
we see that for $k=0,1$, 
\begin{align*}
\big\|a(\cdot)^{-1}\pa_t^{k+1}u(\cdot, s)\big\|_{L^2_{d\mu}}^2
&=
  \int_{\Omega}a(x)^{-1}|\pa_t^{k+1}u(\cdot, s)|^2\,dx
\\
&\leq 
  \|a(\cdot)^{-1}\Phi_\ep(\cdot,t)^{-1}\|_{L^\infty(\Omega)}
  \int_{\Omega}|\pa_t^{k+1}u(\cdot, s)|^2\Phi_\ep\,dx
\\
&\leq 
  \widetilde{C}(1+t)^{\frac{\alpha}{2-\alpha}}E_{\pa t}(t,\pa_t^{k}u)
\\
&\leq 
  \widetilde{C}M_{\ep,k}(1+t)^{-\lambda_0-\frac{2-2\alpha}{2-\alpha}-2k}
  \|(u_0,u_1)\|_{H^{k+1}\times H^k}^2.
\end{align*}
Therefore from Proposition \ref{main} with $k=1$ and $k=0$ we have
\begin{align*}
\mathcal{J}_1(t)
&\leq 
\int_{t/2}^t 
\big\|a(\cdot)^{-1}u_{tt}(\cdot, s)\big\|_{L^2_{d\mu}} ds
\\
&\leq 
  \sqrt{\widetilde{C}M_{1}}
  \|(u_0,u_1)\|_{H^{2}\times H^1}
  \int_{t/2}^t (1+s)^{-\frac{\lambda_0}{2}-\frac{1-\alpha}{2-\alpha}-1} ds
\\
&\leq 
  \frac{2(2-\alpha)}{\lambda_0(2-\alpha)+1-\alpha}
  \sqrt{\widetilde{C}M_{\ep,1}}
  (1+t)^{-\frac{\lambda_0}{2}-\frac{1-\alpha}{2-\alpha}}
  \|(u_0,u_1)\|_{H^{2}\times H^1}
\end{align*}
and 
\begin{align*}
\mathcal{J}_2(t)
&\leq \big\|a(\cdot)^{-1}u_{t}(\cdot, t/2)\big\|_{L^2_{d\mu}}
\leq 
  \sqrt{\widetilde{C}M_{\ep,0}}(1+t)^{-\frac{\lambda_0}{2}-\frac{1-\alpha}{2-\alpha}}
\|(u_0,u_1)\|_{H^{1}\times L^2}.
\end{align*}
Moreover, by Lemma \ref{embedding2}, 
we see by Cauchy--Schwarz inequality that for $t\geq 1$, 
\begin{align*}
\mathcal{J}_3(t)
&\leq 
C\int_0^{t/2} 
(t-s)^{-\frac{N-\alpha}{2(2-\alpha)}-1}
\big
\|a(\cdot)^{-1}u_{t}(\cdot, s)\big\|_{L^1_{d\mu}} ds
\\
&\leq 
C
\left(
\frac{t}{2}
\right)^{-\frac{N-\alpha}{2(2-\alpha)}-1}
\int_0^{t/2} 
  \sqrt{\|\Phi_\ep^{-1}(\cdot,s)\|_{L^1(\Omega)}E_{\pa t}(s;u)}\,
  ds.
\end{align*}
Since 
\begin{align*}
  \|\Phi^{-1}(\cdot,t)\|_{L^1(\Omega)}
  &\leq 
  \int_{\R^N}\exp\left(-\frac{A_{1\ep}}{h+2\ep}\,\frac{|x|^{2-\alpha}}{1+t}\right)\,dx
  \\
  &=
  (1+t)^{\frac{N}{2-\alpha}}
  \int_{\R^N}\exp\left(-\frac{A_{1\ep}}{h+2\ep}|y|^{2-\alpha}\right)\,dy,
\end{align*}
we deduce
\begin{align*}
\mathcal{J}_3(t)
&\leq 
C'
(1+t)^{-\frac{N-\alpha}{2(2-\alpha)}-1}
\|(u_0,u_1)\|_{H^{1}\times L^2}
\int_0^{t/2} 
  (1+s)^{
    \frac{N-\alpha}{2(2-\alpha)}-\frac{\lambda_0}{2}-\frac{1-\alpha}{2-\alpha}}
  \, ds 
\\
&\leq 
C'
\left(
    \frac{N-\alpha}{2(2-\alpha)}-\frac{\lambda_0}{2}+\frac{1}{2-\alpha}
\right)
(1+t)^{-\frac{N-\alpha}{2(2-\alpha)}-1}
  (1+t/2)^{
    \frac{N-\alpha}{2(2-\alpha)}-\frac{\lambda_0}{2}-\frac{1-\alpha}{2-\alpha}+1
  } \\
&\quad \times \|(u_0,u_1)\|_{H^{1}\times L^2}
\\
&\leq 
C''
(1+t)^{-\frac{\lambda_{0}}{2}-\frac{1-\alpha}{2-\alpha}}
\|(u_0,u_1)\|_{H^{1}\times L^2}. 
\end{align*}
Consequently, we obtain
\[
\Big\|u(\cdot,t) - e^{tL_{\ast}}[ u_0 + a(\cdot)^{-1}u_1]\Big\|_{L^2_{d\mu}}
\leq 
C'''(1+t)^{-\frac{\lambda_0}{2}-\frac{1-\alpha}{2-\alpha}}
\|(u_0,u_1)\|_{H^{2}\times H^1}.
\]

Next we show the assertion for $(u_0,u_1)$ 
satisfying $(u_0,u_1)\in (H^2\times H^1_0(\Omega))\times H^1_0(\Omega)$
(the compatibility condition of order $1$) 
via an approximation argument. 
Fix $\phi\in C_c^\infty(\R^N,[0,1])$ such that $\phi\equiv1$ on $\overline{B}(0,R_0)$ and $\phi\equiv0$ on $\R^N\setminus B(0,R_0+1)$ and define 
for $n\in\N$, 
\[
\left(\begin{array}{c}
u_{0n}
\\
u_{1n}
\end{array}\right)
=
\left(\begin{array}{c}
\phi \tilde{u}_{0n}
\\
\phi \tilde{u}_{1n}
\end{array}\right),
\quad 
\left(\begin{array}{c}
\tilde{u}_{0n}
\\
\tilde{u}_{1n}
\end{array}\right)
=
\left(1+\frac{1}{n}\mathcal{A}\right)^{-1}
\left(\begin{array}{c}
u_0
\\
u_1
\end{array}\right),
\]
where $\mathcal{A}$ is an $m$-accretive operator 
in $\mathcal{H}=H^1_0(\Omega)\times L^2(\Omega)$ 
associated with \eqref{dw}, that is, 
\[
\mathcal{A}=
\left(\begin{array}{cc}
0 & -1
\\
-\Delta & a(x)
\end{array}\right)
\]
endowed with domain 
$D(\mathcal{A})=(H^2\cap H^1_0(\Omega))\times H_0^1(\Omega)$. 
Then $(u_{0n},u_{1n})$ satisfies ${\rm supp} (u_{0n},u_{1n})\subset \overline{B}(0,R_0+1)$
and the compatibility condition of order $2$. 
Let $v_n$ be a solution of \eqref{dw} with 
$(u_{0n},u_{1n})$. 
Observe that 
\begin{align*}
\|(u_{0n},u_{1n})\|_{H^2\times H^1}^2
&\leq C^2\|\phi\|_{W^{2,\infty}}^2
\|(\tilde{u}_{0},\tilde{u}_{1})\|_{H^2\times H_1}^2
\\
&\leq C'^2\|\phi\|_{W^{2,\infty}}^2
(\|(\tilde{u}_{0},\tilde{u}_{1})\|_{\mathcal{H}}^2
+\|\mathcal{A}(\tilde{u}_{0},\tilde{u}_{1})\|_{\mathcal{H}}^2)
\\
&\leq C'^2\|\phi\|_{W^{2,\infty}}^2
(\|(u_{0},u_1)\|_{\mathcal{H}}^2
+\|\mathcal{A}(u_{0},u_{1})\|_{\mathcal{H}}^2)
\\
&\leq C''^2\|\phi\|_{W^{2,\infty}}^2
\|(u_{0},u_{1})\|_{H^2\times H^1}^2
\end{align*}
with suitable constants $C$, $C'$, $C''>0$, 
and 
\begin{gather*}
\left(\begin{array}{c}
u_{0n}
\\
u_{1n}
\end{array}\right)
\to 
\left(\begin{array}{c}
\phi u_0
\\
\phi u_1
\end{array}\right)
=
\left(\begin{array}{c}
u_0
\\
u_1
\end{array}\right)
\quad 
\text{in}\ \mathcal{H}
\end{gather*}
as $n\to \infty$ and also $u_{0n}+a^{-1}u_{1n}\to u_0+a^{-1}u_1$ 
in $L^2_{d\mu}$ as $n\to \infty$. 
Using the result of the previous step, we deduce
\[
\Big\|v_n(\cdot,t) - e^{tL_{\ast}}[ u_{0n} + a(\cdot)^{-1}u_{1n}]\Big\|_{L^2_{d\mu}}
\leq 
\tilde{C}(1+t)^{-\frac{\lambda_0}{2}-\frac{1-\alpha}{2-\alpha}}
\|(u_0,u_1)\|_{H^{2}\times H^1}
\]
with some constant $\tilde{C}>0$. 
Letting $n\to\infty$, by continuity of the $C_0$-semigroup $e^{-t\mathcal{A}}$ 
in $\mathcal{H}$
we also obtain diffusion phenomena 
for initial data in $(H^2\cap H^1_0(\Omega))\cap H^1_0(\Omega)$. 
\end{proof}

\section*{Acknowledgments}

This work is supported by 
Grant-in-Aid for JSPS Fellows 15J01600 
of Japan Society for the Promotion of Science
and 
also 
partially supported 
by Grant-in-Aid for Young Scientists Research (B), 
No.\ 16K17619. 
The authors would like to thank the referee 
for giving them valuable comments and suggestions.

\end{document}